\newcommand{\supp}{\mathop{\mathrm{supp}}}
\newtheorem{remark}{Remark}[section]
\newtheorem{definition}{Definition}[section]
\newtheorem{theorem}{Theorem}[section]
\journal{Journal of Computational Physics}
\begin{document}

\begin{frontmatter}

\title{Using hierarchical matrices in the solution of the time-fractional heat equation by multigrid waveform relaxation\tnoteref{mytitlenote}}
\tnotetext[mytitlenote]{}





\author[Tufts]{Xiaozhe Hu}\ead{xiaozhe.hu@tufts.edu}
\address[Tufts]{Department of Mathematics, Tufts University, 
Medford, Massachusetts  02155, USA}

\author[UniZar]{Carmen Rodrigo\corref{cor1}}\ead{carmenr@unizar.es}
\address[UniZar]{IUMA and Applied Mathematics Department, University of Zaragoza, Zaragoza, Spain}

\author[CWI]{Francisco J. Gaspar}\ead{F.J.Gaspar@cwi.nl}
\address[CWI]{CWI, Centrum Wiskunde \& Informatica, Science Park 123, 1090 Amsterdam, The Netherlands}

\cortext[cor1]{Corresponding author. Tel.: +34 976762148; E-mail address: carmenr@unizar.es (C. Rodrigo)}

\begin{abstract}
This work deals with the efficient numerical solution of the time-fractional heat equation discretized on non-uniform temporal meshes.  Non-uniform grids are essential to capture the singularities of ``typical'' solutions of time-fractional problems.  We propose an efficient space-time multigrid method based on the waveform relaxation technique, which accounts for the nonlocal character of the fractional differential operator. 
To maintain an optimal complexity, which can be obtained for the case of uniform grids, we approximate the coefficient matrix corresponding to the temporal discretization by its hierarchical matrix (${\cal H}$-matrix) representation.  In particular, the proposed method has a computational cost of ${\cal O}(k N M \log(M))$, where $M$ is the number of time steps, $N$ is the number of spatial grid points, and $k$ is a parameter which controls the accuracy of the ${\cal H}$-matrix approximation.  The efficiency and the good convergence of the algorithm, which can be theoretically justified by a semi-algebraic mode analysis, are demonstrated through numerical experiments in both one- and two-dimensional spaces. 
\end{abstract}

\begin{keyword}
time-fractional heat equation \sep multigrid waveform relaxation \sep hierarchical matrices \sep graded meshes \sep semi-algebraic mode analysis
\MSC[2010] 00-01\sep  99-00
\end{keyword}

\end{frontmatter}



\section{Introduction}\label{sec:1}

The design of efficient numerical methods for differential equations involving fractional derivatives has become a challenging topic recently, due to its wide range of applications in many different fields~\cite{cushman, hall, hilfer2, lazarov,li_xu, 11hesthaven,metzler_klafter, purohit, Ren.J;Sun.Z;Zhao.X2013a,10hesthaven,Zhao.X;Sun.Z2011a}.   The nonlocal character of the fractional differential operator usually results in a dense coefficient matrix when numerical methods are applied, leading to higher memory requirements as well as a considerable increase of the solution time comparing with solving integer differential equations.  Usually, if uniform meshes are employed, the coefficient matrix has a Toeplitz-like structure, and efficient solvers have been proposed to reduce the computational complexity of traditional Gaussian elimination type methods.  
For time-fractional differential equations, alternating direction implicit schemes (ADI) with a computational complexity of ${\cal O}(NM^2)$, where $N$ is the number of spatial grid-points and $M$ the number of time steps, were proposed in \cite{Zhang2014}.  Also an approximate inversion method with a computational cost of ${\cal O}(N M \log(M))$ has been proposed in \cite{Lin2016}, where the authors approximate the coefficient matrix by a block $\varepsilon$-circulant matrix, which can be block diagonalized by FFT and,  in order to solve the resulting complex block system, the authors use a multigrid method.  A parallel-in-time method based on the parareal algorithm \cite{LionsMaday} has been proposed for solving time-fractional differential equations \cite{Xu_Hesthaven_Chen}. This method consists of an iterative predictor-corrector procedure combining an inexpensive but inaccurate solver with an expensive but accurate solver.
Also, an efficient algorithm for the evaluation of the Caputo fractional derivative, based on the sum-of-exponentials technique, is presented in \cite{fast_Caputo} for the solution of fractional diffusion equations.
Recently, an efficient, robust and parallel-in-time multigrid method based on the waveform relaxation approach has been proposed in \cite{SISC_Gaspar}. By exploiting the Toeplitz-like structure of the coefficient matrix, the computational complexity of the method is ${\cal O}(N M \log(M))$ with a storage requirement of ${\cal O}(NM)$.   
Although some works have studied the solution of fractional diffusion equations on non-uniform temporal meshes, see for example \cite{nonuniform}, most existing efficient solvers can only be applied when a uniform grid is used, or they can only achieve their best performance when that is the case.
The latter happens for the fast solver presented in \cite{SISC_Gaspar}, which can be applied for nonuniform meshes but with a significant increase of the computational complexity, due to the loss of the Toeplitz structure of the matrix.  In the case of space-fractional PDEs, recently, a fast solver based on a geometric multigrid method for nonuniform grids has been proposed  in~\cite{fractional_xiaozhe}. The key in this work is to use hierarchical matrices to approximate the dense stiffness matrices.  Our aim here is to combine the two approaches proposed in \cite{SISC_Gaspar, fractional_xiaozhe} and efficiently solve, both in terms of CPU time and memory requirements, the time-fractional heat equation on nonuniform grids.
  
${\cal H}$-matrices \cite{Bebendorf.M2008a, H_matrices_Hackbusch_book} consist of powerful data-sparse approximations of dense matrices, providing a significant reduction of the storage requirement from ${\cal O}(n^2)$ to ${\cal O}(nk\log(n))$ units of storage ($n$ is the matrix size), where $k$ is a parameter that controls the accuracy of the approximation.  Moreover, the matrix-vector multiplication in ${\cal H}$-matrix format can be done in ${\cal O}(kn\log(n))$ operations.  Therefore, on non-uniform grids, the approximation of the dense matrices arising from the time discretization of the fractional partial differential equations (PDEs) by the ${\cal H}$-matrices representation is the key for maintaining an optimal computational complexity of the multigrid waveform relaxation method~\cite{horton_vandewalle, lubich_ostermann, Vandewalle_book}. 

As in standard multigrid methods (see~\cite{Stu_Tro, TOS01, Wess}), the multigrid waveform relaxation method accelerates the convergence of the waveform relaxation method, which is a continuous-in-time iterative algorithm for solving large systems of ordinary differential equations (ODEs), by introducing a hierarchy of coarser levels.  This method combines the very fast multigrid convergence with the high parallel efficiency of the waveform relaxation.  In practice, it uses a red-black zebra-in-time line relaxation together with a coarse-grid correction procedure based on coarsening only in the spatial dimension.  In this work, we develop an efficient and robust multigrid waveform relaxation method based on the ${\cal H}$-matrix representation of the discretization of the time-fractional heat equation on a nonuniform temporal grid.  The good convergence properties of the algorithm will be theoretically justified by applying a semi-algebraic mode analysis (SAMA)~\cite{sama}.  This analysis is essentially a generalization of the classical local Fourier analysis (LFA) or local mode analysis~\cite{Bra77, Bra94, TOS01, Wess, Wie01} and combines the standard LFA with an algebraic computation that accounts for the non-local character of the fractional differential operators. 
\textcolor{red}{In particular, the semi-algebraic analysis developed in \cite{SISC_Gaspar} for the time-fractional heat equation is used in this work.}
In addition, the classical disadvantage of waveform methods regarding the requirement of extra storage for unknowns is not a drawback here anymore since the storage of the solutions in previous steps is needed anyway for the time-fractional PDEs.

The remainder of this work is structured as follows. In Section~\ref{sec:2}, we introduce the time-fractional model problem and its discretization in the general framework of a non-uniform temporal grid.  Section~\ref{sec:3} is devoted to present the hierarchical matrix representation of the dense matrix corresponding to the time-discretization.  In Section~\ref{sec:4}, the multigrid waveform relaxation method is described, and its computational cost is estimated based on the computations in ${\cal H}$-matrix framework.  In order to illustrate the good behavior of the multigrid waveform relaxation method for solving the time-fractional diffusion problem, in Section~\ref{sec:5}, numerical experiments in both one- and two-dimensional spaces are considered.  In addition, some results of the semi-algebraic mode analysis are also presented in this section to theoretically confirm the good convergence results obtained numerically.  Finally, some conclusions are drawn in Section~\ref{sec:6}.


\section{Model problem and discretization}\label{sec:2}

We consider the time-fractional heat equation, arising by replacing the first-order time derivative with the Caputo derivative of order $\delta$, where $0<\delta<1$. In the literature, this model is also known as fractional sub-diffusion equation, which is a subclass of anomalous diffusive problems \cite{Hilfer, Podlubny}. In this section, we restrict ourselves to the one-dimensional case for the sake of simplicity and formulate the model problem as the following initial-boundary value problem,
\begin{align}
&D_t^{\delta} u - \frac{\partial^2 u}{\partial x^2} = f(x,t),\quad 0<x<L,\; t>0,\label{model_IVP_1}\\
&u(0,t) = 0,\; u(L,t)=0,\quad t>0,\label{model_IVP_2}\\
&u(x,0) = g(x), \quad 0\leq x\leq L.\label{model_IVP_3}
\end{align}
Here $D_t^{\delta}$ denotes the Caputo fractional derivative~\cite{Diethelm,stynes_graded}, defined as follows
\begin{equation*} 
D_t^{\delta} u (x,t):= \left[J^{1-\delta}\left(\frac{\partial u}{\partial t}\right)\right](x,t),\quad 0\leq x\leq L,\; t>0,
\end{equation*}
where $J^{1-\delta}$ is the Riemann-Liouville fractional integral operator given by
\begin{equation*} 
\left(J^{1-\delta}u\right)(x,t):=\left[\frac{1}{\Gamma(1-\delta)}\int_{0}^{t} (t-s)^{-\delta}u(x,s)ds\right],\quad 0\leq x\leq L,\; t>0,
\end{equation*}
with $\Gamma$ being the Gamma function~\cite{gamma}. 

In order to discretize model problem~\eqref{model_IVP_1}-\eqref{model_IVP_3} we consider a uniform mesh in space 
\begin{equation*} 
G_{h} = \left\{x_n = nh,\, n=0,1,\ldots,N+1\right\},
\end{equation*}
where $h=\displaystyle \frac{L}{N+1}$ and $N+1$ is the number of subdivisions in the spatial domain, and a non-uniform grid in time, $G_{\tau}$ given by $0 = t_0<t_1<\cdots <t_{M-1}<t_M = T$ with $T$ being the final time, $M$ representing the number of subdivisions for temporal discretization and the time step size is $\tau_m = t_{m+1}-t_m, \ m=1,\ldots,M-1$. 
Then, the whole grid is given by $G_{h,\tau} = G_h \times G_{\tau}$. For the sake of simplicity, we use a uniform spatial grid in the presentation. Notice, however, that the proposed method can be straightforwardly applied on non-uniform grids.

The diffusion term in \eqref{model_IVP_1} is approximated by standard spatial discretization schemes such as finite difference or finite element methods.  Here, we use a standard second order finite difference scheme, yielding the following semi-discrete problem
\begin{equation}\label{system_ODEs}
D_t^{\delta} u_h(t) + A_hu_h(t) = f_h(t),\; u_h(0) = g_h,\; t>0,
\end{equation}
where $u_h$ and $f_h$ are functions at time $t$ defined on the discrete spatial mesh $G_h$, and $A_h$ is the discrete space approximation.
For the time discretization, we use a Petrov-Galerkin approach.  In order to describe such an approximation, we consider the following simplified problem,
\begin{equation}\label{simple_eq}
D_t^{\delta} u(t) = f(t), \  t\in(0,T], 
\end{equation}
with initial condition $u(0) = 0$.
As standard in the Galerkin finite element framework, we consider the finite dimensional space ${\mathcal V} :=\hbox{span}\{\varphi_1, \ldots, \varphi_{M}\}$, where $\varphi_i$ are the standard piecewise linear basis functions defined on $G_{\tau}$. For the test functions we consider the following Dirac's delta functions $\psi_m(t) = \delta(t-t_m)$. Next, equation \eqref{simple_eq} is multiplied by $\psi_m(t)$ and integrated over interval $(0,T)$, which gives
\begin{equation*}
\int_0^T D_t^{\delta} u(t) \ \psi_m(t) {\rm d}t = \int_0^T f(t) \psi_m(t) {\rm d}t.
\end{equation*}
Since $u(t) \approx u^M(t) = \sum_{j=1}^M u_j\varphi_j(t)$, by using the definition of the Caputo fractional derivative, we obtain that
\begin{equation*}
\sum_{j=1}^M \left( \frac{1}{\Gamma(1-\delta)}\int_0^T \left( \int_0^t (t-s)^{-\delta}\varphi_j'(s) {\rm d}s\right) \psi_m(t) {\rm d}t\right) u_j = f(t_m).
\end{equation*}
This leads to a linear system of equations $R\bm{u} = \bm{f}$, where $\bm{u} = (u_1, u_2, \cdots, u_M)^T$, $\bm{f}=(f(t_1), f(t_2), \cdots, f(t_M))^T$, and the entries of the coefficient matrix are 
\begin{equation}\label{elements_matrix_R}
R_{m,j} =  \frac{1}{\Gamma(1-\delta)}\int_0^T \left( \int_0^t (t-s)^{-\delta}\varphi_j'(s) {\rm d}s\right) \psi_m(t) {\rm d}t.
\end{equation}
It is easy to see that when $j>m$ $$R_{m,j} = \frac{1}{\Gamma(1-\delta)}\int_0^{t_m} (t_m-s)^{-\delta}\varphi_j'(s) {\rm d}s = 0,$$ 
and, therefore, $R$ is a dense lower triangular matrix whose entries are given by $R_{m, m-k} = d_{m, k+1} - d_{m,k}$ for $k=0,\ldots,m-1$, where 
\begin{eqnarray*}
d_{m,0}&=&0,\\
d_{m,1} &=& \frac{\tau_m^{-\delta}}{\Gamma(2-\delta)}, \\
d_{m,k} &=& \frac{1}{\Gamma(2-\delta)} \left(\frac{(t_m-t_{m-k})^{1-\delta}-(t_m-t_{m-k+1})^{1-\delta}}{\tau_{m-k+1}} \right), \ k = 2,\ldots,m-1.
\end{eqnarray*}
Note that the obtained discretization of the Caputo derivative is
\begin{equation*}  
D_M^{\delta} u_{m} = \frac{1}{\Gamma(2-\delta)}\left(d_{m,1}u_m +\sum_{k = 1}^{m-1} (d_{m,k+1}-d_{m,k}) u_{m-k}  \right),
\end{equation*}
which corresponds to the generalization of the well-known L1 scheme~\cite{lin_xu, Oldham_Spanier, ZhongSun_Wu} for non-uniform grids~\cite{stynes_graded}. 

\begin{remark} By considering different choices of the test or trial functions, we can obtain different discretizations for the time-fractional Caputo derivative.  
For example, by using piecewise quadratic basis functions for trial functions, we can obtain a temporal discretization with higher accuracy.  There are many high order schemes for the time fractional problems, see e.g.,~\cite{Alikhanov.A2015a,Cao.J;Xu.C2013a,Zhao.X;Sun.Z;Karniadakis.G2015a}, and it will be interesting to see if we can reconstruct some of the existing high order schemes by choosing different test and trial functions. In this work, however, we focus on the L1 scheme and the study of high order schemes will be the subject of our future work. 

\end{remark}

Finally, by denoting as $u_{n,m}$ the nodal approximation to the solution at each grid point $(x_n,t_m)$, we
approximate~\eqref{model_IVP_1}-\eqref{model_IVP_3} by the following discrete problem,
\begin{align}
&D_M^{\delta} u_{n,m} - \frac{u_{n+1,m}-2u_{n,m}+u_{n-1,m}}{h^2} = f(x_n,t_m), \; 1\leq n\leq N, \; 1\leq m\leq M, \label{discrete_model_IVP_1} \\
&u_{0,m} = 0,\; u_{N+1,m}=0, \quad 0<m\leq M, \label{discrete_model_IVP_2} \\
&u_{n,0} = g(x_n), \quad 0\leq n\leq N+1. \label{discrete_model_IVP_3}
\end{align}
In the case of a temporal uniform grid with time-step $\tau$, it is proved rigorously that this scheme has a rate of convergence of ${\cal O}(h^2+\tau^{\delta})$ for ``typical'' solutions of the time-fractional heat equation, that is,  solutions presenting a boundary layer at the initial time. To improve the poor convergence when the fractional order $\delta$ is very small, in \cite{stynes_graded} the authors proposed to use a graded mesh in time for which the resulting scheme has a convergence order of ${\cal O}(h^2+M^{-(2-\delta)})$. 
Such a grid is defined by $t_m = T(m/M)^r$ with $m=0,\ldots, M$, where $r\geq (2-\delta)/\delta$. In particular, in this work we choose $r= (2-\delta)/\delta$, as suggested in \cite{stynes_graded}.

Notice that matrix $R$ is a dense lower triangular matrix.  For the case of a uniform grid, it has Toeplitz structure, which can be exploited to develop efficient algorithms. For example, a geometric multigrid (GMG) method with a computational cost of ${\cal O}(NM\log(M))$ was proposed in~\cite{SISC_Gaspar}.  For the case of a non-uniform grid, $R$ does not have Toeplitz structure anymore and although the GMG method~\cite{SISC_Gaspar} can still be applied, this gives rise to a significant increase of the computational complexity.  Our aim is to approximate the matrix $R$ by $\widetilde{R}$ which can be stored in a data-sparse format and then design an efficient multigrid solver with a computational cost of ${\cal O}(kNM\log(M))$ where $k$ is a parameter that controls the accuracy of such approximation. 


\section{Discretization based on ${\cal H}$-Matrices representation}\label{sec:3}

In this section we aim to approximate matrix $R$ by $\widetilde{R}$ based on the ${\cal H}$-matrices framework.  According to~\eqref{elements_matrix_R}, the entries of $R$ are defined using the kernel~${\cal K}(t,s) =(t-s)^{-\delta}$.  Similar to typical kernel functions, singularities only occur when $t = s$.  The kernel function is smooth everywhere else and decays when $|t-s| \rightarrow \infty$.  This implies that the entries of $R$ decay to $0$ when they are far away from the diagonal, and then they usually can be replaced by low-rank approximations.  Such approximation can be done by replacing the original kernel by a {\it degenerate (separable) kernel},
\begin{equation}\label{degenerate_kernel}
\widetilde{\cal K}(t,s) := \sum_{\nu = 0}^{k-1} p_{\nu}(t) q_{\nu}(s).
\end{equation}
This is a partial sum of $k$ terms which usually is obtained by truncation of certain infinite sum of $\mathcal{K}$.  Moreover, each term is a product of two functions, one only depends on $t$ and the other one only depends on $s$.  This approximation is appropriate, however, only when the truncation error can be bounded uniformly, i.e., $t$ and $s$ should be sufficiently far away from each other.  More concretely, the feasibility of such approximation is characterized by the following condition.
\begin{definition}[Admissibility condition]\label{admissibility}
Let $I_t:=[a,b],\ I_s:=[c,d]\subset [0,T]$ be two intervals such that $d<a$. We say that $I_t\times I_s$ satisfies the {\it admissibility condition} if the following holds,
\begin{equation}\label{def:admissibility}
\hbox{{\rm diam}}(I_t) \leq \hbox{{\rm dist}}(I_s,I_t).
\end{equation}
We define the set of indices $\vartheta \times \sigma = \{ (m,j) \  \ni \ (\supp \psi_m,\supp \varphi_j)\subset I_t\times I_s\}$. We say that $\vartheta \times \sigma$ is admissible if $I_t\times I_s$ satisfies the admissibility condition.
\end{definition}

\begin{remark}
More general admissibility conditions could be considered as shown in~\cite{Bebendorf.M2008a,H_matrices_Hackbusch_book}.  Here, we use this simple choice to demonstrate the idea and make the presentation easy to follow.
\end{remark}

Next, we approximate the kernel ${\cal K}(t,s)$ by its truncated Taylor expansion in a set $I_t\times I_s$ satisfying the admissibility condition. Taking into account that
$$\partial^{\nu}_t\left[(t-s)^{-\delta}\right] = (-1)^{\nu}\prod_{l=1}^{\nu}(\delta + l -1)(t-s)^{-(\delta+\nu)},\ \nu=1,2,\ldots,$$
the truncated Taylor expansion of the kernel ${\cal K}(t,s)$ at the midpoint of interval $I_t$, that is $t_0 = \frac{a+b}{2}$, is given by
\begin{equation*} 
{\cal K}(t,s) \approx \sum_{\nu =0}^{k-1}\frac{1}{\nu !}\left( \prod_{l = 1}^{\nu} (1-\delta-l)\right) (t_0-s)^{-(\delta+\nu)}(t-t_0)^{\nu} =: \widetilde{\cal K}(t,s).
\end{equation*} 
In this way, we have obtained a degenerate kernel $\widetilde{\cal K}(t,s)$ in $I_t\times I_s$ \eqref{degenerate_kernel} where 
\begin{eqnarray*}
p_{\nu}(t) &:=& (t-t_0)^{\nu},\\
q_{\nu}(s) &:=& \frac{1}{\nu !}\left( \prod_{l = 1}^{\nu} (1-\delta-l)\right) (t_0-s)^{-(\delta+\nu)}.
\end{eqnarray*}
By using the degenerate kernel, we approximate the matrix entries $R_{m,j}$ in \eqref{elements_matrix_R}, $\forall (m,j)\in \vartheta \times \sigma$, by the following
\begin{eqnarray*}
R_{m,j} \approx \widetilde{R}_{m,j} &=&  \frac{1}{\Gamma(1-\delta)}\int_0^T \left( \int_0^t \widetilde{\cal K}(t,s) \varphi_j'(s) {\rm d}s\right) \psi_m(t) {\rm d}t \\
&=& \frac{1}{\Gamma(1-\delta)}\int_0^T \left( \int_0^t  \sum_{\nu = 0}^{k-1} p_{\nu}(t) q_{\nu}(s) \varphi_j'(s) {\rm d}s\right) \psi_m(t) {\rm d}t.
\end{eqnarray*}
We observe that the double integral can be separated into a product of two single integrals as follows,
\begin{equation*}
\widetilde{R}_{m,j} = \frac{1}{\Gamma(1-\delta)} \sum_{\nu = 0}^{k-1} \left(\int_0^Tp_{\nu}(t)  \psi_m(t) {\rm d}t\right) \left(\int_0^t  q_{\nu}(s) \varphi_j'(s) {\rm d}s\right).
\end{equation*}
From this expression, submatrix $R|_{\vartheta\times \sigma}$ can be approximated by $\widetilde{R}|_{\vartheta\times \sigma}$, which is a ${\bm R}k$-matrix \cite{H_matrices_Hackbusch}, i.e., (see Figure \ref{low_rank_pic})
$$R|_{\vartheta\times \sigma} \approx \widetilde{R}|_{\vartheta \times \sigma} = A B^T, \quad A\in {\mathbb R}^{|\vartheta| \times k}, \ B\in{\mathbb R}^{|\sigma|\times k},$$
where $|\vartheta|$ and $|\sigma|$ denote the cardinality of sets $\vartheta$ and $\sigma$, respectively.  The entries of the matrices $A$ and $B$ are given by
\begin{eqnarray}
&& \! \! \! \! \! \! \! \! \! \! \! \! \! \! \! \! A_{m,\nu} \! \! := \! \! \frac{1}{\Gamma(1-\delta)}\int_0^Tp_{\nu}(t)  \psi_m(t) {\rm d}t \! = \! \frac{1}{\Gamma(1-\delta)} (t_m-t_0)^{\nu}, \label{A_elements} \\
&& \! \! \! \! \! \! \! \! \! \! \! \! \! \! \! \!  B_{j,\nu} \! \! := \! \! \int_0^t \! \!  q_{\nu}(s) \varphi_j'(s) {\rm d}s \! = \! \frac{c_{\nu}}{1-\delta-\nu}
\! \! \left(\!\frac{1}{\tau_{j-1}}(m_{j-1}-m_j) -\frac{1}{\tau_{j}}(m_j-m_{j+1})\! \! \right)\! \!, 
\label{B_elements}
\end{eqnarray}
where $m_i = (t_0-t_i)^{1-\delta-\nu}$ and $\displaystyle c_{\nu} = \frac{1}{\nu !}\left( \prod_{l = 1}^{\nu} (1-\delta-l)\right)$.
\begin{figure}[htb]
\begin{center}
\includegraphics[width = 0.4\textwidth]{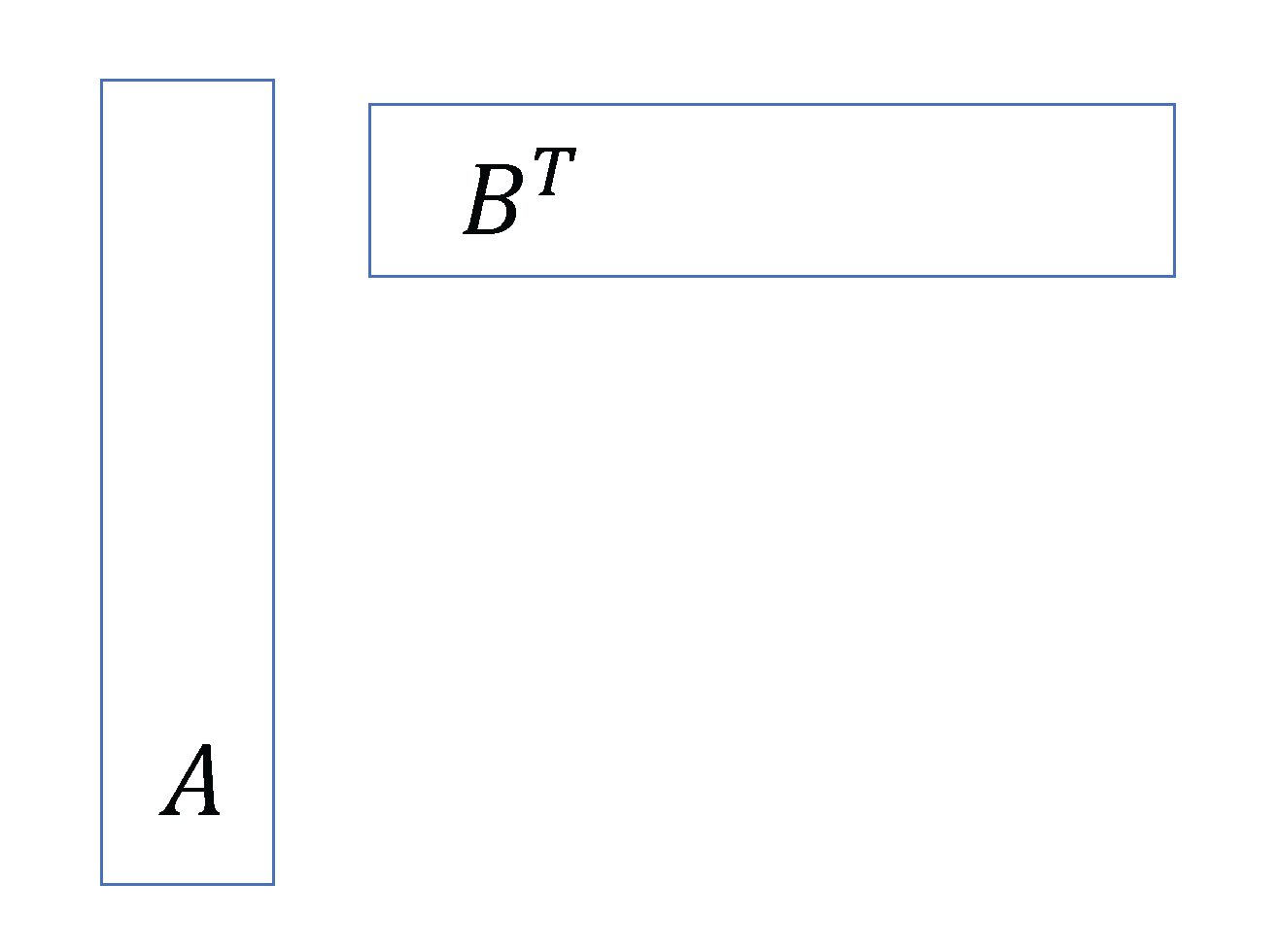}
\caption{Representation of ${\bm R}k$-matrix in factorized form.}
\label{low_rank_pic}
\end{center}
\end{figure}

Notice that submatrix $R|_{\vartheta \times \sigma}$ is approximated by a low-rank representation with at most rank $k$, which only needs $k(|\vartheta|+|\sigma|)$ number of elements to store.  More importantly, this approximation in an admissible block gives rise to a uniformly bounded element-wise truncation error, as stated in the following theorem.
\begin{theorem}\label{theorem_estimate}
Let $I_t:=[a,b],\ I_s:=[c,d]$ be two intervals on $[0,T]$ such that $d<a$. Assume that $I_t\times I_s$ satisfies the admissibility condition~\eqref{def:admissibility}. Then, for all set of indices $(m,j)\in \vartheta\times \sigma$, we have the following estimate of the approximation error,
\begin{equation*} 
|R_{m,j}- \widetilde{R}_{m,j}| = {\cal O}(3^{-k}),
\end{equation*}
where $k$ is the number of terms considered in the truncated Taylor expansion of the kernel ${\cal K}(t, s)$.
\end{theorem}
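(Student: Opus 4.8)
The plan is to reduce the bound on the matrix entries to a pointwise estimate of the kernel truncation error on the admissible block, and then to obtain the latter from the shifted binomial series of $(t-s)^{-\delta}$.

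First I would eliminate the time integral. Since $\psi_m(t)=\delta(t-t_m)$, for every pair $(m,j)\in\vartheta\times\sigma$ one has
\[
R_{m,j}-\widetilde{R}_{m,j}=\frac{1}{\Gamma(1-\delta)}\int_0^{t_m}\bigl({\cal K}(t_m,s)-\widetilde{\cal K}(t_m,s)\bigr)\,\varphi_j'(s)\,{\rm d}s .
\]
By admissibility, $\supp\psi_m\subset I_t$ and $\supp\varphi_j\subset I_s$, hence $t_m\in I_t$, $\supp\varphi_j'\subset I_s$, and $t_m-s\geq\mathrm{dist}(I_s,I_t)>0$ on the domain of integration, so the kernel is smooth there. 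Using $\int_0^{t_m}|\varphi_j'(s)|\,{\rm d}s\leq 2$, it then suffices to prove the uniform bound $\sup_{s\in I_s,\,t\in I_t}|{\cal K}(t,s)-\widetilde{\cal K}(t,s)|={\cal O}(3^{-k})$.

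Next I would identify $\widetilde{\cal K}$ as a truncated series. Writing ${\cal K}(t,s)=(t_0-s)^{-\delta}\,(1+z)^{-\delta}$ with $z:=\dfrac{t-t_0}{t_0-s}$ and $t_0=\tfrac{a+b}{2}$, the binomial expansion gives $(1+z)^{-\delta}=\sum_{\nu\geq 0}\binom{-\delta}{\nu}z^\nu$, and a direct check shows that the coefficient $\tfrac1{\nu!}\prod_{l=1}^\nu(1-\delta-l)$ appearing in $\widetilde{\cal K}$ equals $\binom{-\delta}{\nu}$; hence $\widetilde{\cal K}$ is exactly the degree-$(k-1)$ partial sum and
\[
{\cal K}(t,s)-\widetilde{\cal K}(t,s)=(t_0-s)^{-\delta}\sum_{\nu\geq k}\binom{-\delta}{\nu}z^\nu .
\]
For $0<\delta<1$ one has $\bigl|\binom{-\delta}{\nu}\bigr|=\prod_{l=1}^\nu\frac{\delta+l-1}{l}\leq 1$, so whenever $|z|<1$ the tail is controlled by a geometric series, $\bigl|{\cal K}-\widetilde{\cal K}\bigr|\leq (t_0-s)^{-\delta}\,\dfrac{|z|^{k}}{1-|z|}$.

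The main point — and the only place where the constant $3$ (rather than $2$) enters — is the geometric estimate $|z|\leq 1/3$. Since $t_0$ is the \emph{midpoint} of $I_t=[a,b]$, $|t-t_0|\leq\tfrac12\mathrm{diam}(I_t)$; and for $s\in I_s=[c,d]$ with $d<a$,
\[
t_0-s\;\geq\;t_0-d\;=\;(a-d)+\tfrac12(b-a)\;=\;\mathrm{dist}(I_s,I_t)+\tfrac12\mathrm{diam}(I_t).
\]
The admissibility condition $\mathrm{diam}(I_t)\leq\mathrm{dist}(I_s,I_t)$ then yields $t_0-s\geq\tfrac32\mathrm{diam}(I_t)\geq 3\,|t-t_0|$, i.e.\ $|z|\leq 1/3$. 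Substituting, $\bigl|{\cal K}(t,s)-\widetilde{\cal K}(t,s)\bigr|\leq(t_0-s)^{-\delta}\cdot\tfrac{3}{2}\,3^{-k}\leq\tfrac32\bigl(\mathrm{dist}(I_s,I_t)\bigr)^{-\delta}3^{-k}$, and combining with the reduction above,
\[
\bigl|R_{m,j}-\widetilde{R}_{m,j}\bigr|\;\leq\;\frac{3\,\bigl(\mathrm{dist}(I_s,I_t)\bigr)^{-\delta}}{\Gamma(1-\delta)}\,3^{-k}\;=\;{\cal O}(3^{-k})
\]
uniformly in $(m,j)\in\vartheta\times\sigma$, the implied constant depending only on $\delta$ and on the fixed geometry of the block. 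I expect the geometric step to be the only genuine obstacle: a naive Lagrange-remainder argument, which replaces $t_0-s$ by $|\xi-s|$ for some $\xi$ between $t$ and $t_0$, only gives ratio $\le 1/2$ because $\xi$ may slide to the near endpoint $a$ of $I_t$; anchoring the denominator at the fixed midpoint $t_0$ in the binomial series is precisely what sharpens $1/2$ to $1/3$. The remaining ingredients — the coefficient identity, the bound $\int|\varphi_j'|\leq 2$, and $|\binom{-\delta}{\nu}|\leq 1$ — are routine.
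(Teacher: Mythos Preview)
Your argument is correct. The route, however, differs from the paper's. The paper does not pass through the pointwise kernel error: it bounds each low-rank factor $A_{m,\nu}$ and $B_{j,\nu}$ separately, estimating $B_{j,\nu}$ by second-order Taylor expansions of $(t_0-t_{j\pm 1})^{1-\delta-\nu}$ about $t_j$, and then sums the tail $\sum_{\nu\ge k}(\delta+\nu)r^\nu$ with the same ratio $r=\frac{|a-t_0|}{|a-t_0|+|a-d|}\le\tfrac13$. That detour buys an extra mesh-size factor $(\tau_{j-1}+\tau_j)$ in the constant, at the price of a polynomial prefactor $k$ in front of $3^{-k}$. Your reduction via $\int|\varphi_j'|\le 2$ followed by the binomial-series identification of $\widetilde{\cal K}$ is shorter, yields a clean $3^{-k}$ with no $k$-prefactor, and makes the role of the midpoint in forcing $|z|\le\tfrac13$ completely transparent; the only thing you give up is the $(\tau_{j-1}+\tau_j)$ refinement, which the stated ${\cal O}(3^{-k})$ does not require anyway. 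Both proofs ultimately rest on the identical geometric inequality $\frac{|t-t_0|}{t_0-s}\le\frac{\mathrm{diam}(I_t)}{\mathrm{diam}(I_t)+2\,\mathrm{dist}(I_t,I_s)}\le\tfrac13$, so your closing remark about why anchoring at $t_0$ sharpens $1/2$ to $1/3$ is exactly the mechanism the paper exploits as well.
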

\begin{proof}
From \eqref{A_elements} it is easy to see that
\begin{equation}\label{bound_A}
|A_{m,\nu}| \leq \frac{1}{\Gamma(1-\delta)}|a-t_0|^{\nu},\ \ \nu\geq 0,
\end{equation}
where $t_0=(a+b)/2$. Taking into account the following Taylor expansions,
\begin{eqnarray*}
(t_0-t_{j-1})^{1-\delta-\nu} &=& (t_0-t_j)^{1-\delta-\nu}+\tau_{j-1}(1-\delta-\nu)(t_0-t_j)^{-\delta-\nu}\\
&+& \frac{\tau_{j-1}^2}{2}(1-\delta-\nu)(-\delta-\nu)(t_0-\xi_j)^{-1-\delta-\nu}, \ \ \xi_j\in[t_{j-1},t_j],\\
(t_0-t_{j+1})^{1-\delta-\nu} &=& (t_0-t_j)^{1-\delta-\nu}-\tau_{j}(1-\delta-\nu)(t_0-t_j)^{-\delta-\nu}\\
&+& \frac{\tau_{j}^2}{2}(1-\delta-\nu)(-\delta-\nu)(t_0-\xi_{j+1})^{-1-\delta-\nu}, \ \ \xi_{j+1}\in[t_{j},t_{j+1}],
\end{eqnarray*}
from~\eqref{B_elements}, we obtain that
\begin{equation*} 
B_{j,\nu} =  \frac{c_{\nu}(-\delta-\nu)}{2}\left(\tau_{j-1}(t_0-\xi_j)^{-(1+\delta+\nu)}+\tau_j(t_0-\xi_{j+1})^{-(1+\delta+\nu)}\right).
\end{equation*}
By using that $|c_{\nu}|\leq 1$ and that $\xi_j, \xi_{j+1}\in [c,d]$, we have the following bound
\begin{equation}\label{bound_B}
|B_{j,\nu}|\leq \frac{\delta+\nu}{2} (\tau_{j-1}+\tau_j) \left(\frac{1}{|a-t_0|+|a-d|}\right)^{1+\delta+\nu}, \ \ \nu\geq 0.
\end{equation}
For each $\nu\geq 0$, by using the bounds in \eqref{bound_A} and \eqref{bound_B}, the product of $A_{m,\nu}$ and $B_{j,\nu}$ is given as follows,
\begin{equation*}
|A_{m,\nu} B_{j,\nu}| \leq  \frac{(\delta+\nu) (\tau_{j-1}+\tau_j)}{2\Gamma(1-\delta)(|a-t_0|+|a-d|)^{1+\delta}}  \left(\frac{|a-t_0|}{|a-t_0|+|a-d|}\right)^{\nu}.
\end{equation*}
Denoting $\displaystyle r = \frac{|a-t_0|}{|a-t_0|+|a-d|}$, we can bound the element-wise error in the following way,
\begin{equation}\label{bound_error_aux}
|R_{m,j}-\widetilde{R}_{m,j}| \leq \sum_{\nu = k}^{\infty} |A_{m,\nu} B_{j,\nu}|\leq  \frac{(\tau_{j-1}+\tau_j)}{2\Gamma(1-\delta)(|a-t_0|+|a-d|)^{1+\delta}}  \sum_{\nu = k}^{\infty} (\delta+\nu) r^{\nu}.
\end{equation}
The sum of the series appearing in \eqref{bound_error_aux} is given by,
$$\sum_{\nu = k}^{\infty} (\delta+\nu) r^{\nu} = r^k \left(\frac{\delta}{1-r} + \frac{k}{1-r} + \frac{r}{(1-r)^2}\right).$$
Taking into account that $\displaystyle r = \frac{{\rm diam}(I_t)}{{\rm diam}(I_t) + 2{\rm dist}(I_t,I_s)}$ and that $\displaystyle \frac{{\rm diam}(I_t)}{{\rm dist}(I_t,I_s)}\leq 1$ due to the admissibility condition~\eqref{def:admissibility}, we have 
\begin{equation*} 
\sum_{\nu = k}^{\infty} (\delta+\nu) r^{\nu} \leq 3^{-k}\left(\frac{3}{2} (\delta+k)+ \frac{3}{4}\right).
\end{equation*}
Note that the dominating term is $3^{-k}$, and thus we can conclude that 
$$|R_{m,j}-\widetilde{R}_{m,j}|  = {\cal O}(3^{-k}),$$
which gives us an upper bound of the element-wise truncation error. 
\end{proof}
\textcolor{red}{
\begin{remark} \label{rem:super-close}
Using the $\mathcal{H}$-matrix representation means we are solving the perturbed linear system $\widetilde{R}\widetilde{\bm{u}} = \bm{f}$, $\widetilde{\bm{u}} = (\widetilde{u}_1, \widetilde{u}_2, \cdots, \widetilde{u}_M)^T$.  By the standard perturbation theory of solving linear systems of equations and the element-wise error estimate $|R_{m,j}-\widetilde{R}_{m,j}|  = {\cal O}(3^{-k})$ from Theorem~\ref{theorem_estimate}, we can easily get that $\| \bm{u} - \widetilde{\bm{u}} \|_{\infty} = \mathcal{O}(3^{-k})$, where $\|\cdot \|_{\infty}$ denotes the standard $\ell^{\infty}$-norm.  By triangular inequality, we naturally have 
\begin{align*}
\max_{1\leq j \leq M} |u(t_j) - \widetilde{u}_j | &\leq \max_{1 \leq j \leq M} |u(t_j) - u_j | + \max_{1 \leq j \leq M} |u_j - \widetilde{u}_j | \\
& = \max_{1 \leq j \leq M} |u(t_j) - u_j | + \mathcal{O}(3^{-k}).
\end{align*}
Therefore, if we choose $k$ large enough, the error introduced by the $\mathcal{H}$-matrix representation is negligible and the error estimate remains the same, i.e., $\mathcal{O}(\tau^{\delta})$ on a temporal uniform grid and $\mathcal{O}(M^{-(2-\delta)})$ on a graded mesh in time.  For example, we choose $k=20$ in our experiments, since $3^{-20} \approx 3\times10^{-10}$ is quite small, we can see that the convergence order of the numerical solution obtained by the $\mathcal{H}$-matrix representation remains the same numerically. 
\end{remark}
}

The approximation of the admissible blocks of matrix $R$ gives its ${\cal H}$-matrix representation $\widetilde{R}$, in which those admissible submatrices are stored in a ${\bm R}k$-matrix representation and the rest submatrices are stored in a full-matrix format.  In practice, such an ${\cal H}$-matrix representation is usually constructed by using tree data structures. For the details of the construction we refer the reader to the books \cite{Bebendorf.M2008a,H_matrices_Hackbusch_book}.  Here, we only present an intuitive explanation of the algorithm for constructing the ${\cal H}$-matrix representation.  We start from the original coefficient matrix, by dividing it into four submatrices, as shown in Figure \ref{construction_Hmat} (a). The upper right block is a zero block because of the structure of $R$ and it is colored in light blue. Then, we check if the other subblocks satisfy the admissibility condition~\eqref{def:admissibility}.  At this level, none of them satisfies the admissibility condition and each of them is recursively divided into four blocks, yielding to the structure shown in Figure \ref{construction_Hmat} (b).  The blocks above the diagonal are zero again and we check the admissibility condition for the rest of the blocks.  For this level, three blocks are admissible and they are stored as low-rank approximation (colored in purple in the picture).  The rest of the blocks (colored in pink) are split again recursively.  In this way, and by repeating the same checking and coloring procedure, we obtain the structure in Figure \ref{construction_Hmat} (c). This process is recursively carried out until we get the resulting ${\cal H}$-matrix representation, in which the non-zero subblocks satisfying the admissibility condition are stored in a low-rank matrix representation and the rest of the blocks are stored in a full-matrix representation.
\begin{figure}[htb]
\begin{center}
\begin{tabular}{ccc}
\includegraphics[width = 0.3\textwidth]{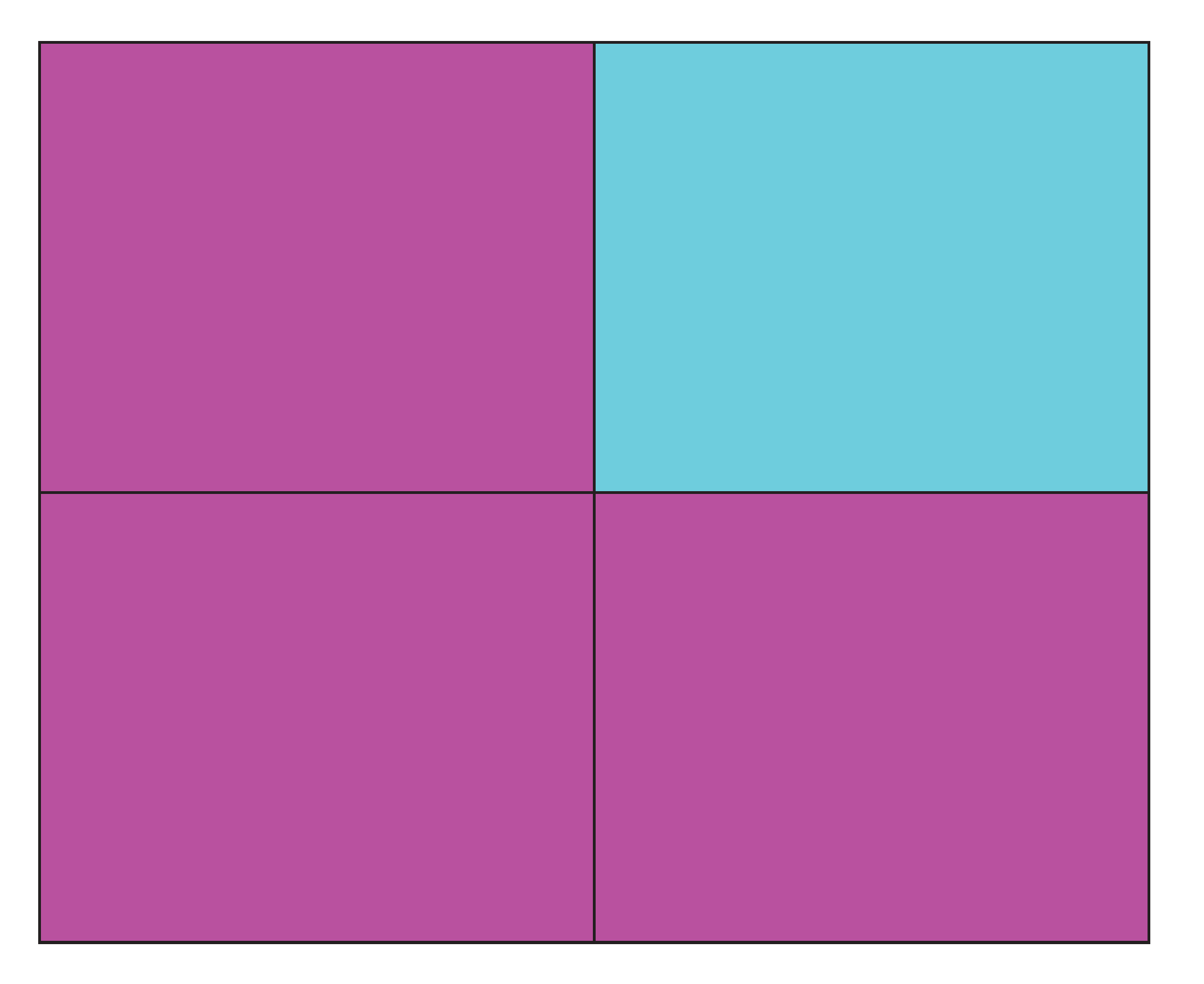}
&
\includegraphics[width = 0.3\textwidth]{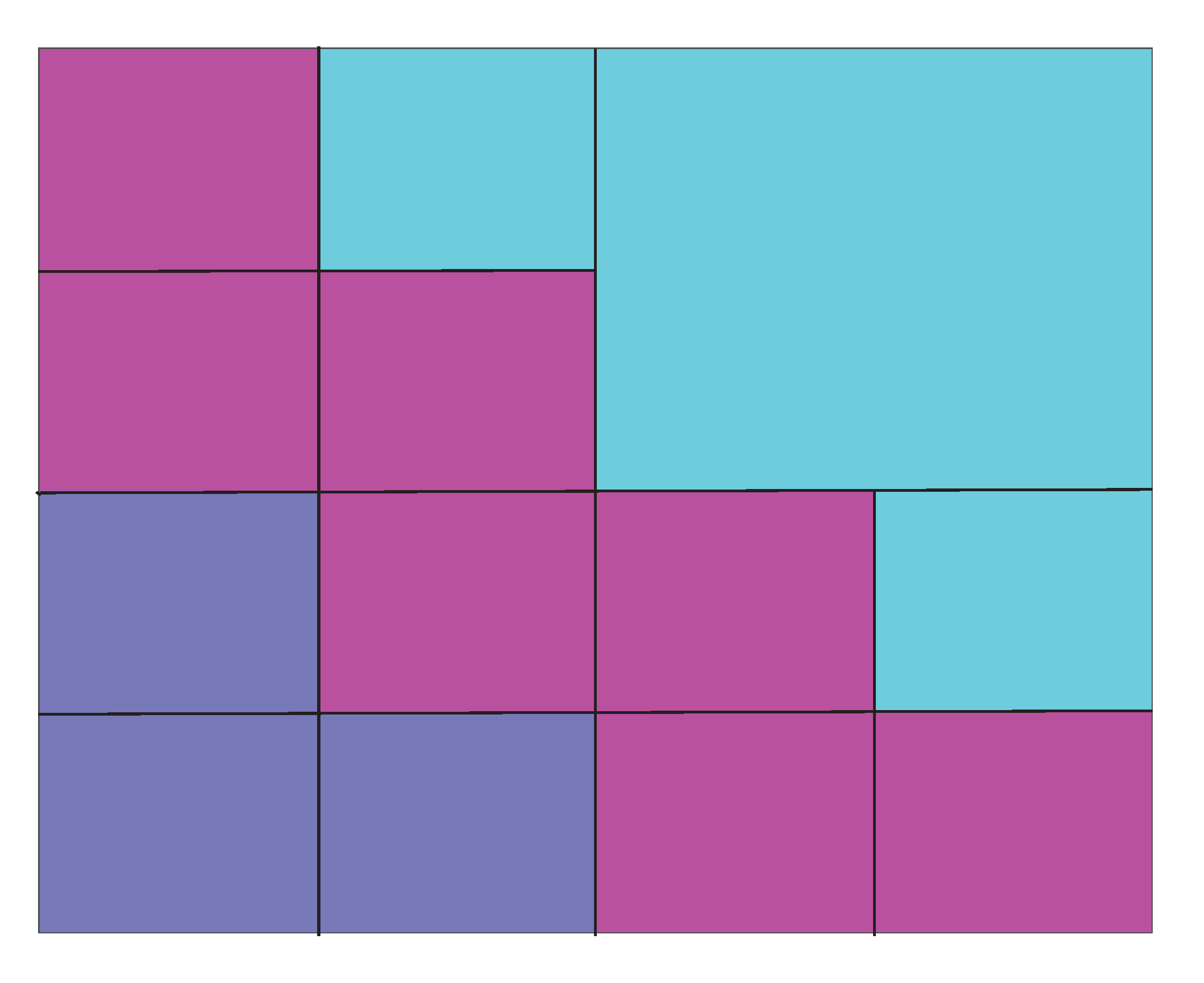}
&
\includegraphics[width = 0.3\textwidth]{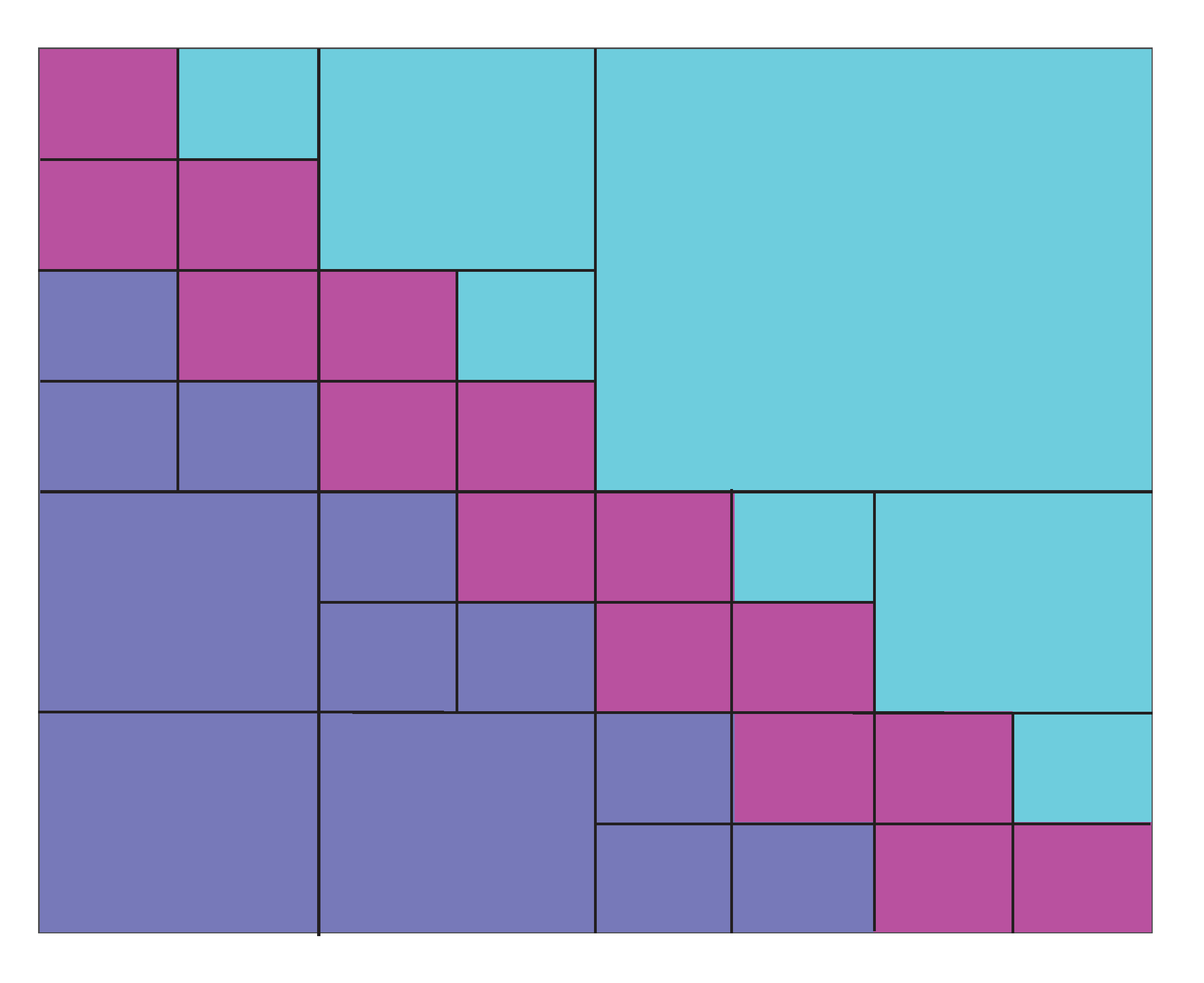}
\\
(a) & (b) & (c)
\end{tabular}
\caption{Schematic procedure for the construction and storage of the ${\cal H}$-matrix representation. Zero blocks are colored in light blue, low-rank approximation blocks in purple, and those blocks stored in a full-matrix representation in pink.}
\label{construction_Hmat}
\end{center}
\end{figure}

We would like to emphasize that the ${\cal H}$-matrix representation is computed only for the matrix corresponding to the time-discretization, i.e., matrix $R$, which is a dense matrix. Combining this representation $\widetilde{R}$ with the spatial discretization matrix $A_h$ in \eqref{system_ODEs}, we obtain the coefficient matrix of the fully-discrete system, denoted here by $A_{h,\tau}$.


\section{Multigrid waveform relaxation method}\label{sec:4}  

In this section, we introduce the proposed multigrid waveform relaxation method for solving~\eqref{discrete_model_IVP_1}-\eqref{discrete_model_IVP_3} based on the ${\cal H}$-matrix representation, which gives an optimal algorithm even on non-uniform grids.

Waveform relaxation methods, also known as dynamic iteration methods, are continuous-in-time iterative algorithms for numerically solving large systems of ordinary differential equations.  Different from standard iterative techniques, waveform relaxation iterates functions in time instead of scalar values. 
They can also be applied to solve time dependent PDEs by replacing the spatial derivatives by the spatial discrete approximations, obtaining semi-discretizations of the problems. In this way, the PDEs are transformed into a large set of ordinary differential equations, which can then be solved by an iterative algorithm. 

For the time-fractional heat equation, after semi-discretization, we obtain the system of ordinary differential equations \eqref{system_ODEs}.
Next step is to solve~\eqref{system_ODEs} by an iterative method.  In particular, in this work, we consider a red-black Gauss-Seidel iteration (denoted by $S_h$) which consists of a two-stage procedure, i.e., the updates are performed first on the even points and then on the grid-points with odd numbering.  In addition, to accelerate the convergence of the red-black Gauss-Seidel waveform relaxation, a coarse-grid correction procedure based on a coarsening strategy only in the spatial dimension is performed. This results in the so-called linear multigrid waveform relaxation algorithm~\cite{Vandewalle_book}.  If standard inter-grid transfer operators, as full-weighting restriction and linear interpolation, are considered, the algorithm of the multigrid waveform relaxation (WRMG) is given in Algorithm~\ref{wrmg_1}, where $k$ denotes the iteration number and $t$ is the time variable.
\begin{algorithm}[H]
\caption{ \textbf{: Multigrid waveform relaxation: ${\mathbf{u_{h}^k(t) \rightarrow u_{h}^{k+1}(t)}}$}}\label{wrmg_1}
\vspace{0.3cm}
\begin{algorithmic}
\IF{we are on the coarsest grid-level (with spatial grid-size given by $h_0$)}
\STATE $D_t^{\delta} u_{h_0}^{k+1}(t) + A_{h_0}u_{h_0}^{k+1}(t)= f_{h_0}(t)$ \hspace{0.9cm} {Solve with a direct or fast solver.}
\ELSE
\STATE {
\begin{tabular}{lr}
\\ [-3.3ex]
$\overline{u}_{h}^k(t)=S_{h}^{\nu_1}(u_{h}^k(t))$ & \hspace{-3.5cm}\textbf{(Pre-smoothing)}\\ & \hspace{-4.3cm}$\nu_1$ steps of the \textbf{red-black waveform relaxation}.\\
\\ [-3.3ex]
$\overline{r}_{h}^k(t)=f_{h}(t) - (D_t^{\delta} + A_{h})\, \overline{u}_{h}^k(t)$ & \hspace{-3.5cm}Compute the defect. \\
\\ [-3.3ex]
$\overline{r}_{2h}^k(t)=I_h^{2h}\, \overline{r}_{h}^k(t)$ & \hspace{-3.5cm}Restrict the defect. \\
\\ [-3.3ex]
$(D_t^{\delta}+A_{2h}) \widehat{e}_{2h}^k(t) = \bar{r}_{2h}^k(t),\; \widehat{e}_{2h}^k(0) = 0$ & \hspace{-3.5cm}Solve the defect equation \\ [0.5ex]
& \hspace{-3.5cm} on $G_{2h}$ by performing $\gamma \ge 1$ cycles of WRMG. \\
\\ [-3.3ex]
$\widehat{e}_{h}^k(t) = I_{2h}^h \, \widehat{e}_{2h}^k(t) $ & \hspace{-3.5cm}Interpolate the correction. \\
\\ [-3.3ex]
$\overline{u}_{h}^{k+1}(t) = \overline{u}_{h}^k(t) + \widehat{e}_{h}^k(t)$ & \hspace{-3.5cm}Compute a new approximation. \\
\\ [-3.3ex]
$u_{h}^{k+1}(t)=S_{h}^{\nu_2}(\overline{u}_{h}^{k+1}(t))$ & \hspace{-3.5cm}\textbf{(Post-smoothing)}\\ & \hspace{-4.3cm}$\nu_2$ steps of the \textbf{red-black waveform relaxation}. \\
\\ [-3.3ex]
\end{tabular}}
\ENDIF
\end{algorithmic}
\vspace{0.3cm}
\end{algorithm}

In the practical implementation of Algorithm \ref{wrmg_1}, a discrete-time algorithm should be used.  Therefore, after discretizing in time by replacing the differential operator $D_t^{\delta}$ by $D_M^{\delta}$, the previous algorithm can be interpreted as a space-time multigrid method with coarsening only in space.   In order to reduce the high computational cost of solving $D_M^{\delta}$, we use the ${\cal H}$-matrix representation of operator  $D_M^{\delta}$.  Therefore, the algorithm is applied to the coefficient matrix ${\cal A}_{h,\tau}$ which is obtained by the spatial discretization matrix $A_h$ and the ${\cal H}$-matrix representation $\widetilde{R}$ in time.  Finally, the whole multigrid waveform relaxation combines a zebra-in-time line relaxation with a standard semi-coarsening strategy only in the spatial dimension as shown in Algorithm~\ref{wrmg}, where $k$ denotes the iteration number.
\begin{algorithm}[H]
\caption{ \textbf{: Multigrid waveform relaxation: ${\mathbf{u_{h,{\boldsymbol \tau}}^k \rightarrow u_{h,{\boldsymbol \tau}}^{k+1}}}$}}\label{wrmg}
\vspace{0.3cm}
\begin{algorithmic}
\IF{we are on the coarsest grid-level (with grid-size given by $h_0$ and $\tau$)}
\STATE $u_{h_0,\tau}^{k+1} = \mathcal{A}_{h_0,\tau}^{-1}f_{h_0,\tau}$ \hspace{2.9cm} {Solve with a direct or fast solver.}
\ELSE
\STATE {
\begin{tabular}{lr}
\\ [-3.3ex]
$\overline{u}_{h,\tau}^k=S_{h,\tau}^{\nu_1}(u_{h,\tau}^k)$ & \textbf{(Pre-smoothing)}\\ & $\nu_1$ steps of \textbf{zebra-in-time line relaxation}.\\
\\ [-3.3ex]
$\overline{r}_{h,\tau}^k=f_{h,\tau} - \mathcal{A}_{h,\tau}\, \overline{u}_{h,\tau}^k$ & Compute the defect. \\
\\ [-3.3ex]
$\overline{r}_{2h,\tau}^k=I_h^{2h}\, \overline{r}_{h,\tau}^k$ & Restrict the defect \textbf{only in space}. \\
\\ [-3.3ex]
$\widetilde{e}_{2h,\tau}^k = 0$ & Take the zero grid function as a first  \\
& approximation on the coarse grid. \\
\\ [-3.3ex]
$\mathcal{A}_{2h,\tau} \widehat{e}_{2h,\tau}^k = \bar{r}_{2h,\tau}^k$ & Solve the defect equation on $\Omega_{2h,\tau}$ \\
& by performing $\gamma \ge 1$ cycles of WRMG. \\
\\ [-3.3ex]
$\widehat{e}_{h,\tau}^k = I_{2h}^h \, \widehat{e}_{2h,\tau}^k $ & Interpolate the correction \textbf{only in space}. \\
\\ [-3.3ex]
$\overline{u}_{h,\tau}^{k+1} = \overline{u}_{h,\tau}^k + \widehat{e}_{h,\tau}^k$ & Compute a new approximation. \\
\\ [-3.3ex]
$u_{h,\tau}^{k+1}=S_{h,\tau}^{\nu_2}(\overline{u}_{h,\tau}^{k+1})$ & \textbf{(Post-smoothing)}\\ & $\nu_2$ steps of \textbf{zebra-in-time line relaxation}. \\
\\ [-3.3ex]
\end{tabular}}
\ENDIF
\end{algorithmic}
\vspace{0.3cm}
\end{algorithm}

\subsection{Computational cost of the algorithm}
From Algorithm~\ref{wrmg}, it is clear that the most time-consuming parts of the multigrid waveform
relaxation method are the calculation of the residual and the relaxation step. These components require matrix-vector multiplications and the solution of dense lower triangular systems, respectively.  A standard implementation of these parts would give rise to a computational cost of at least  ${\cal O}(NM^2)$, whereas the remaining components of the algorithm can be performed with a computational cost proportional to the number of unknowns. 
As we are going to see next, we can reduce the computational cost of the algorithm to ${\cal O}(kNM\log(M))$ thanks to the use of the hierarchical matrices. 

In the calculation of the defect, a matrix-vector multiplication is needed. The calculations corresponding to the spatial discretization can be performed with a computational cost of ${\cal O}(NM)$.  For each spatial grid-point, however, the matrix-vector multiplication $\widetilde{R}x$ for a given vector $x$ is required.   This can be carried out by using the standard matrix-vector multiplication based on the ${\cal H}$-matrix format (see~\cite{H_matrices_Hackbusch_book}).  We take advantage of the lower triangular structure of matrix $\widetilde{R}$ to avoid the calculations corresponding to the zero upper triangular part, and use a slightly modified matrix-vector multiplication algorithm, which is given in Algorithm~\ref{H_product}.  Basically, an extra $\textbf{if}$ statement is added to handle the case of zero blocks.

\begin{algorithm}[H]
\caption{ \textbf{: Matrix-vector multiplication in ${\cal H}$-matrix format}}\label{H_product}
\vspace{0.5cm}
\begin{algorithmic}
\STATE $y = \verb"Hmatvec"(H,x)$
\IF{$H$ is a zero matrix}
\STATE $y = 0$
\ELSIF{$H$ is full matrix}
\STATE $y = Hx$
\ELSIF{$H = AB^T$ ($H$ is low-rank approximation)}
\STATE $y = A(B^Tx)$
\ELSIF{$H$ is stored in $(2\times 2)$-block form $H = \left( \begin{array}{cc} H_{11} & H_{12} \\ H_{21} & H_{22} \end{array}\right)$}
\STATE partition $x = \left( \begin{array}{c} x_1 \\ x_2 \end{array}\right)$
\STATE $y_1 = $\verb"Hmatvec"$(H_{11},x_1)$ + \verb"Hmatvec"$(H_{12},x_2)$ 
\STATE $y_2 = $\verb"Hmatvec"$(H_{21},x_1)$ + \verb"Hmatvec"$(H_{22},x_2)$ 
\STATE $y = \left( \begin{array}{c} y_1 \\ y_2 \end{array}\right)$
\ENDIF
\end{algorithmic}
\vspace{0.5cm}
\end{algorithm}

It is well-known that the computational complexity for the matrix-vector multiplication in ${\cal H}$-matrix format is ${\cal O}(kM\log(M))$. Since this is the computational cost for each spatial grid-point, the whole matrix-vector product in the calculation of the residual requires ${\cal O}(kNM\log(M))$ operations. 

In the relaxation step, for each spatial grid-point we require the solution of a lower triangular system of $M$ equations, i.e., $(\widetilde{R}+2/h^2I_M)x = b$.  This is done by applying the standard forward substitution method in the ${\cal H}$-matrix format (Algorithm \ref{H_forward_substitution}) to $(\widetilde{R}+2/h^2I_M)$. It is well-known that such a method has a computational cost of ${\cal O}(kM\log(M))$, see e.g., \cite{Bebendorf.M2008a,H_matrices_Hackbusch_book}. 
\begin{algorithm}[H]
\caption{ \textbf{: Forward substitution in ${\cal H}$-matrix format}}\label{H_forward_substitution}
\vspace{0.5cm}
\begin{algorithmic}
\STATE $x = \verb"Hforwardsubst"(H,b)$
\IF{$H$ is full matrix}
\STATE $x = H^{-1}b$
\ELSIF{$H$ is stored in $(2\times 2)$-block form $H = \left( \begin{array}{cc} H_{11} & 0 \\ H_{21} & H_{22} \end{array}\right)$}
\STATE partition $b = \left( \begin{array}{c} b_1 \\ b_2 \end{array}\right)$
\STATE $x_1 = $\verb"Hforwardsubst"$(H_{11},b_1)$ 
\STATE $b_2 = b_2 - $\verb"Hmatvec"$(H_{21},x_1)$
\STATE $x_2 = $\verb"Hforwardsubst"$(H_{22},b_2)$  
\STATE $x = \left( \begin{array}{c} x_1 \\ x_2 \end{array}\right)$
\ENDIF
\end{algorithmic}
\vspace{0.5cm}
\end{algorithm}

Notice that in Algorithm \ref{H_forward_substitution}, we only need to consider the two cases
of the matrix being stored in a full-matrix format or in a $(2\times 2)$-block form because the diagonal blocks of $\widetilde{R}$ are always not admissible and stored in a dense matrix format.


Overall, the computational cost of an iteration of one $V$-cycle multigrid waveform relaxation method is ${\cal O}(kNM\log(M))$.  Since the convergence rate of such an algorithm usually is independent of the number of unknowns, as we will see, only few cycles are needed to reach the desired accuracy, and the total computational cost for solving the time-fractional heat equation on graded meshes is ${\cal O}(kNM\log(M))$.

\begin{remark} All the parallelization techniques for multigrid methods can be used for the implementation of the multigrid waveform algorithm  on parallel computers. Moreover, the proposed algorithm can be parallelized in the time direction by using the  multigrid waveform relaxation with cyclic reduction \cite{horton_vandewalle}.
\end{remark}


\section{Numerical results}\label{sec:5}

This section is devoted to illustrate the good performance of the proposed multigrid waveform relaxation method for the solution of the time-fractional heat equation when graded meshes are considered.  We also present some results obtained by a semi-algebraic mode analyisis,
which usually provides accurate predictions of the performance of the multigrid methods, and indeed, it can be made rigorous if appropriate boundary conditions are considered.  In particular, here we consider the semi-algebraic mode analysis introduced in~\cite{sama}, which was already applied to study the convergence of the WRMG algorithm for the time-fractional heat equation in the case of uniform temporal grids in~\cite{SISC_Gaspar}.  This analysis is based on an exponential Fourier mode analysis or local Fourier analysis technique only in space and an exact analytical approach in time.  This is the key that allows us to apply this analysis when non-uniform grids in time are considered. For the details of this analysis we refer the readers to~\cite{SISC_Gaspar}.  The only modification that we need to do when using the graded meshes is to consider the new coefficients of the matrix corresponding to the time discretization.  Notice that the analysis is applied to the original discretization rather than to the ${\cal H}$-matrix representation that we use in the implementation.  As we proved in Theorem \ref{theorem_estimate}, the element-wise difference between both matrices is ${\cal O}(3^{-k})$ and, therefore, by choosing sufficiently large $k$, the results of the analysis by using the original matrix are reliable to predict the practical results obtained by using the ${\cal H}$-matrix representation (see Remark~\ref{rem:super-close}). We will see in the following experiments that this analysis gives rise to very accurate predictions.

We consider two numerical experiments: one test problem which is one dimensional in space, and a second example which is two dimensional in space. We choose rank $k=20$ in our experiments except in the cases where we vary $k$.  We will consider $V$-cycles for all the experiments since their convergence rates are similar to those obtained by $W$-cycles and, therefore, they provide a more efficient multigrid method in practice.

All numerical computations were carried out using MATLAB on a MacBook Pro with a
Core i5 2.7 GHz and 8 GB RAM, running OS X 10.10 (Yosemite).

\subsection{One-dimensional time-fractional heat equation}
In the first numerical experiment we consider a problem whose solution is smooth away from the initial time but has a certain singular behavior at $t=0$ where it presents an initial layer. These are reasonably general and realistic hypotheses on the behavior of the solution of the considered problems near the initial time. In particular, we consider problem~\eqref{model_IVP_1}-\eqref{model_IVP_3} defined on $[0,\pi]\times [0,1]$, with a zero right-hand side ($f(x,t) = 0$) and an initial condition $g(x) = \sin\, x$. The solution of this problem is $u(x,t) = E_{\delta}(-t^{\delta})\sin\, x$, where $E_{\delta}:{\mathbb R} \rightarrow {\mathbb R}$ is given by
$$E_{\delta}(z) := \sum_{k=0}^{\infty}\frac{z^k}{\Gamma(\delta \, k + 1)},$$
 (see \cite{Luchko, stynes_graded}). In Figure~\ref{solution_picture} (a), we can observe the singularity of the analytical solution for near the initial time, where an initial layer appears. The picture corresponds to the fractional order $\delta = 0.6$, and following the rule given in Section \ref{sec:2} to construct the optimal graded mesh, in Figure~\ref{solution_picture} (b) such a grid is displayed.  
\begin{figure}[htb]
\begin{center}
\begin{tabular}{cc}
\includegraphics[width =0.51\textwidth]{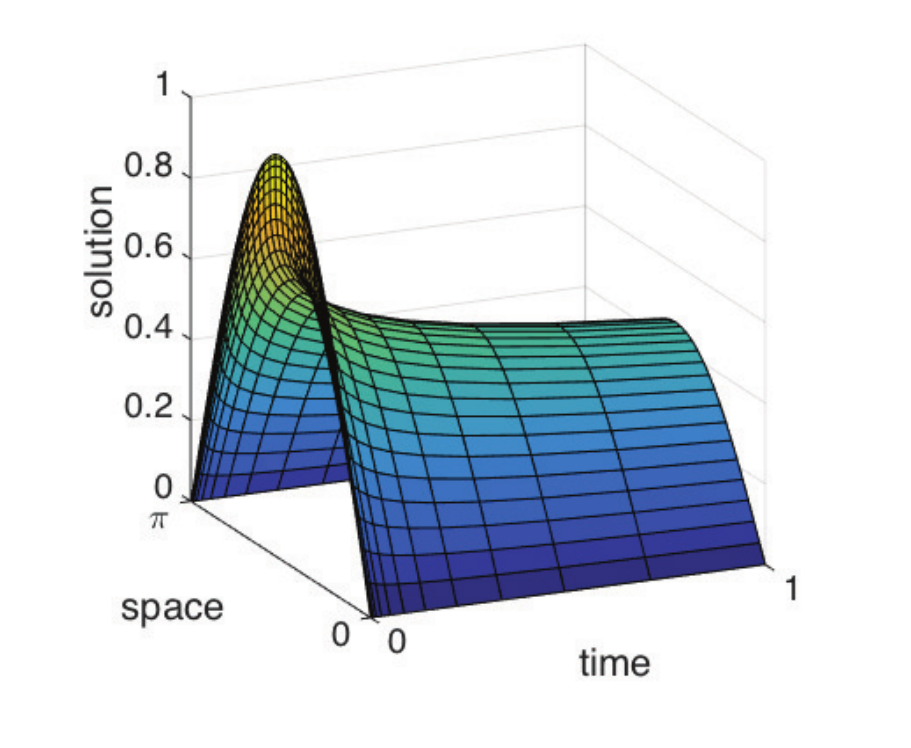}
& 
\includegraphics[width = 0.4\textwidth]{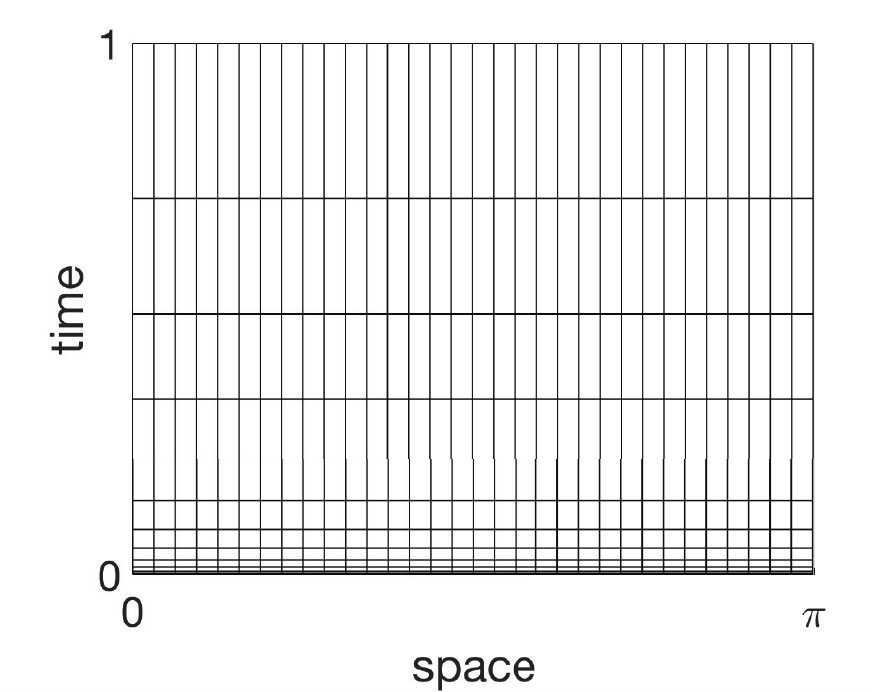}
\\
(a) & (b) 
\end{tabular}
\caption{(a) Analytical solution $u(x,t)$ of the first test problem, for fractional order $\delta=0.6$ and (b) corresponding graded mesh.}
\label{solution_picture}
\end{center}
\end{figure}

As stated in \cite{stynes_graded}, for ``typical'' solutions of~\eqref{model_IVP_1}-\eqref{model_IVP_3}, a rate of convergence of ${\mathcal O}(h^2+\tau^{\delta})$ is obtained when the discrete scheme on uniform grids is considered, whereas the convergence order improves to ${\mathcal O}(h^2+M^{-(2-\delta)})$ with the use of graded meshes. This can be seen in Figure \ref{error_reduction}, where for a fractional order $\delta = 0.4$ and both uniform and graded meshes, we display the maximum errors between the analytical and the numerical solution for various numbers of time-steps $M$ and by using a sufficiently fine spatial grid ($N=2048$). It is observed that the convergence order with the uniform grid is $0.4$, whereas it increases to $1.6$ when the graded mesh is used. 
\begin{figure}[htb]
\begin{center}
\includegraphics[width = 0.5\textwidth]{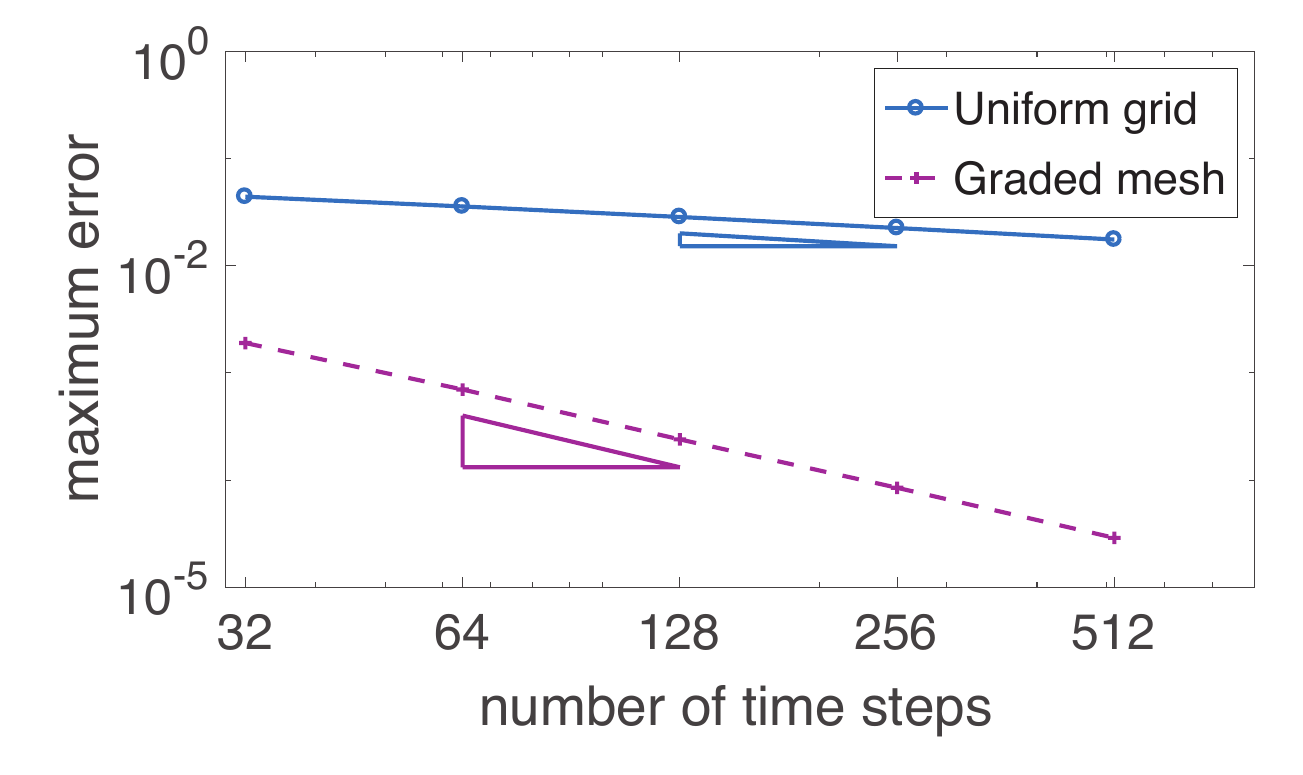}
\caption{Reduction of the errors by using a uniform grid (solid line) and a graded mesh (dotted line) for a fractional order $\delta = 0.4$, and by using a sufficiently fine spatial grid ($N=2048$).}
\label{error_reduction}
\end{center}
\end{figure}

For different values of $\delta$, in Table \ref{table_errors_1d} we display the maximum errors $E_M = \|u-u_{N,M}\|$ obtained when using $N = 1024$ and different values of $M$, together with the corresponding reduction orders computed by $\log_2(E_M/E_{2M})$.
It can be seen that the reduction orders asymptotically match with the expected convergence rates.
\begin{table}[htb]
\begin{center}
\begin{tabular}{|c|c|c|c|c|c|c|}
\cline{1-7}
$\delta$ & & $M=32$ & $M=64$ & $M=128$ & $M=256$ & $M=512$  \\
\hline
\multirow{2}{*}{$0.4$} & $E_M= \|u-u_{N,M}\|$ & $1.9E{-3}$ & $7.0E{-4}$ & $2.4E{-4}$ & $8.5E{-5}$ & $2.9E{-5}$ \\
& $\log_2(E_M/E_{2M})$ & $1.44$ & $1.50$ & $1.53$ & $1.55$ & \\
\hline
\multirow{2}{*}{$0.6$} & $E_M= \|u-u_{N,M}\|$ & $3.3E{-3}$ & $1.4E{-3}$ & $5.5E{-4}$ & $2.1E{-4}$ & $8.3E{-5}$\\ 
& $\log_2(E_M/E_{2M})$ & $1.23$ & $1.35$ & $1.35$ & $1.36$ & \\
\hline
\multirow{2}{*}{$0.8$} & $E_M= \|u-u_{N,M}\|$ & $5.0E{-3}$ & $2.4E{-3}$ & $1.1E{-3}$ & $5.0E{-4}$ & $2.2E{-4}$ \\
& $\log_2(E_M/E_{2M})$ & $1.05$ & $1.12$ & $1.13$ & $1.14$ & \\
\hline
\end{tabular}
\caption{Maximum errors $E_M = \|u-u_{N,M}\|$ obtained for three different values of $\delta$, with $N = 1024$ spatial grid-points and different numbers of time steps $M$, and corresponding reduction orders $\log_2(E_M/E_{2M})$.}
\label{table_errors_1d}
\end{center}
\end{table}

Next, we study the convergence of the proposed multigrid waveform relaxation method on graded meshes. For this purpose, we perform a semi-algebraic mode analysis which provides very accurate predictions of the asymptotic convergence factors of the multigrid method. In Table \ref{table_SAMA_comparison}, we show the two-grid convergence factors with one smoothing step provided by the analysis for four values of the fractional order $\delta$ and different values of $N$ and $M$.  The convergence factors obtained by using a multilevel $W$-cycle in numerical experiments are displayed (in brackets) in this table as well, showing a good agreement between the predictions and the real convergence rates.  This demonstrates that the analysis is a very useful tool for studying the convergence of the proposed multigrid waveform relaxation method. 
\begin{table}[htb]
\begin{center}
\begin{tabular}{cccc}
\hline
$\delta $ & $64\times 64$ & $128\times 128$ & $256 \times 256$ \\
\hline 
$0.2$ & 0.118 (0.118) & 0.114 (0.116) & 0.114 (0.114)  \\
$0.4$ & 0.123 (0.124) & 0.122 (0.123) & 0.115 (0.117) \\
$0.6$ & 0.106 (0.107) & 0.088 (0.089) & 0.067 (0.068) \\
$0.8$ & 0.066 (0.067) & 0.043 (0.045) & 0.026 (0.028)\\
\hline
\end{tabular}
\caption{Comparison between SAMA predictions and experimentally computed convergence factors (in brackets) with one smoothing step, for four values of $\delta$ and different space-time grid sizes. }
\label{table_SAMA_comparison}
\end{center}
\end{table}

Since we look for an efficient solver that is robust with respect to the number of time steps ($M$), in Figure \ref{LFA_variable_M}, we show the two-grid convergence factors predicted by the analysis for different values of $M$ as well as different fractional orders $\delta$, for a fixed value of $N = 128$.  Indeed, the convergence rates of the proposed multigrid method are bounded below $0.2$ for any values of the parameters $\delta$ and $M$ which demonstrates its robustness.
\begin{figure}[htb]
\begin{center}
\includegraphics[width = 0.8\textwidth]{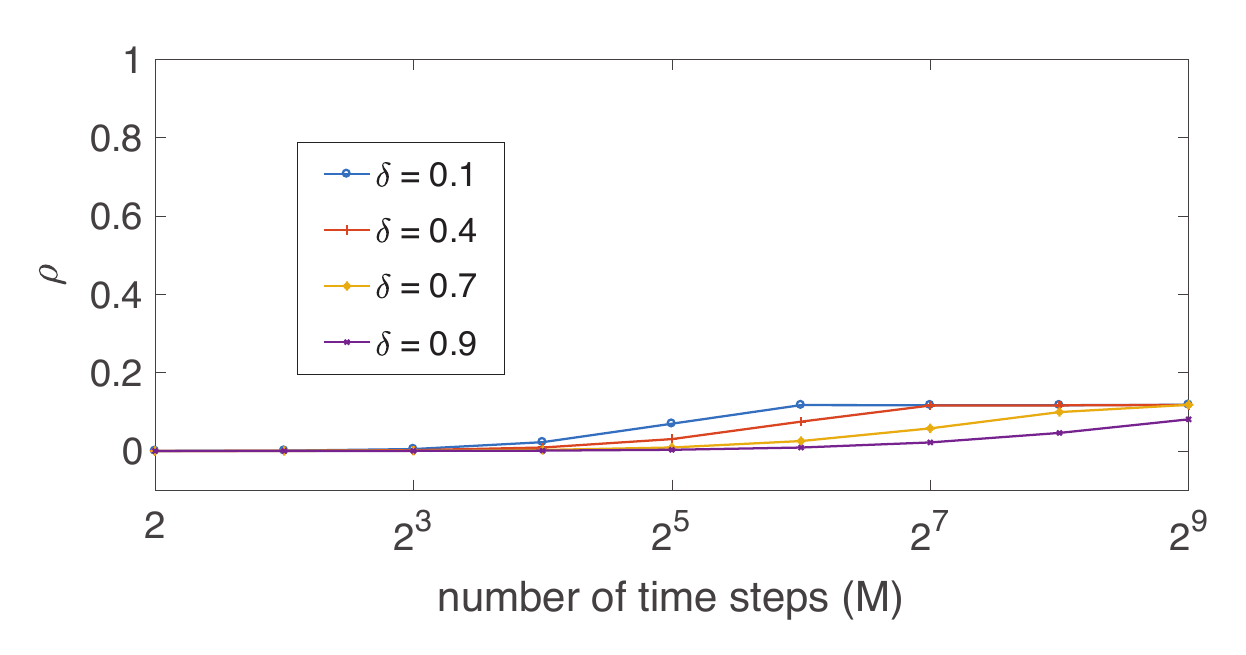}
\caption{Two-grid convergence factors predicted by SAMA for different numbers of time steps, $M$, and four fractional orders $\delta$, for a fixed number of spatial grid-points, $N = 128$.}
\label{LFA_variable_M}
\end{center}
\end{figure}

In order to have a robust multigrid solver, the convergence rate of the method should also be independent of the number of spatial grid-points, $N$.  We perform the semi-algebraic analysis to study the performance for different values of $N$, and a fixed number of time-steps $M=256$. The results are presented in Figure \ref{LFA_variable_N}, where the two-grid convergence factors predicted by SAMA are shown for different values of $\delta$ and for a wide range of values of $N=2^l$ with $l=1,\ldots, 20$.
\begin{figure}[htb]
\begin{center}
\includegraphics[width = 0.75\textwidth]{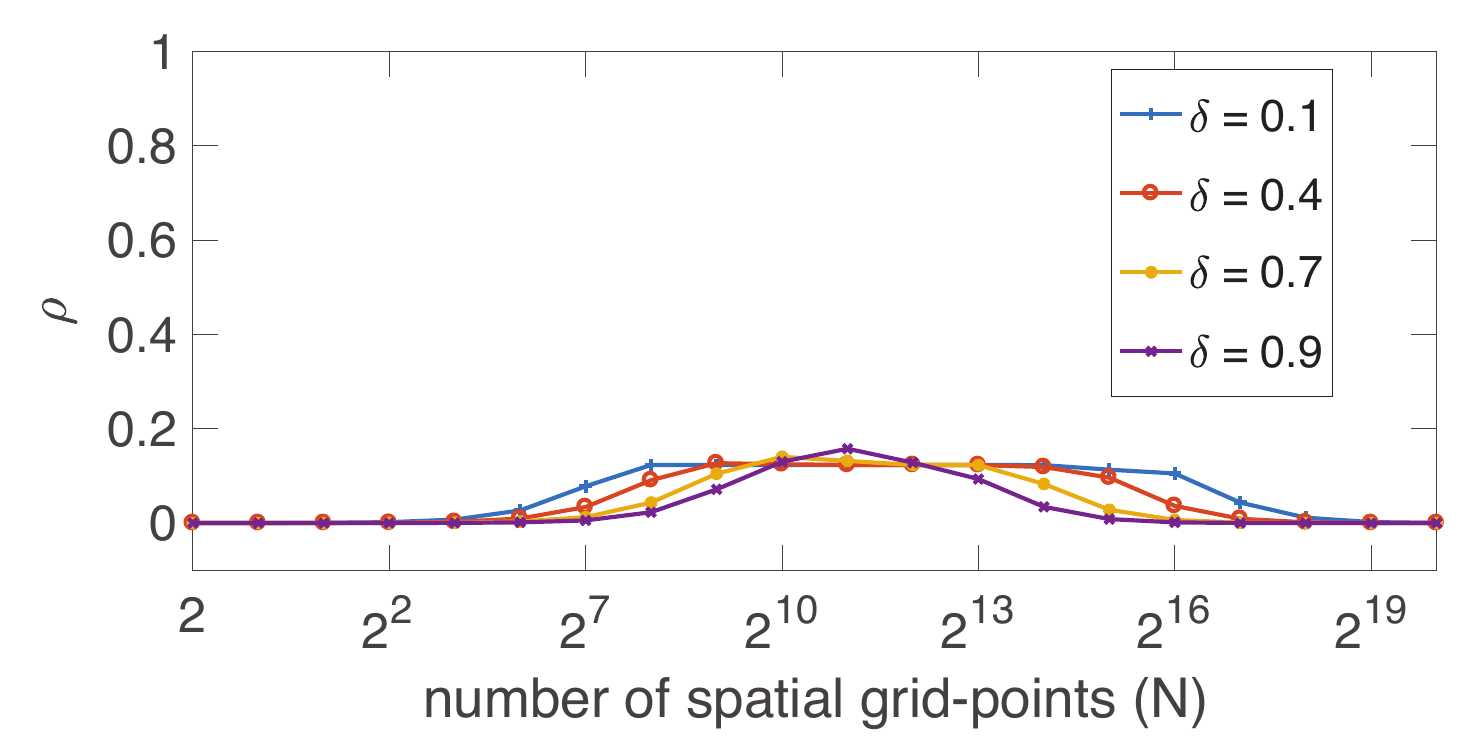}
\caption{Two-grid convergence factors predicted by SAMA for different numbers of spatial grid-points $N$ and four fractional orders $\delta$, for a fixed number of time-steps $M=256$.}
\label{LFA_variable_N}
\end{center}
\end{figure}
We can observe that in all cases the obtained convergence factors are bounded below $0.2$, providing a robust solver for the considered time-fractional problem for any value of the fractional order $\delta$.

In practice, $V$-cycle multigrid schemes are often preferred comparing with $W$-cycles because of their lower computational cost. Here we use $V$-cycles in the following numerical experiments since, as we see, they provide a convergence that is not far from that of the two-grid or $W$-cycles. 

We consider a $V$-cycle with no pre-smoothing and only one post-smoothing step. We choose a $V(0,1)$ instead of a $V(1,0)-$cycle since the first approach provides better convergence factors.
In Table \ref{table_it_1d}, we display the number of WRMG iterations necessary to reduce the initial residual in a factor of $10^{-10}$ for different values of the fractional order $\delta$ and different grid-sizes varying from $128\times 128$ to $2048\times 2048$ doubling the mesh-size in both spatial and temporal dimensions.  We also show the obtained average convergence factors. We can conclude that the convergence of the considered WRMG is very robust independently of the spatial discretization parameter and of the use of the graded mesh.\\

\begin{table}[htb]
\begin{center}
\begin{tabular}{cccccc}
\hline
$\delta$ & $128\times 128$ & $256\times 256$ & $512\times 512$ & $1024\times 1024$ & $2048\times 2048$ \\
\hline
0.2 & 11 (0.11) & 11 (0.11) & 11 (0.11) &  11 (0.11)  & 11 (0.11) \\
0.4 & 11 (0.11) & 11 (0.11) & 11 (0.11) & 10 (0.09)  &  9 (0.08) \\
0.6 & 10 (0.09) & 9  (0.06) & 8   (0.05) &   7 (0.04) &  7 (0.04) \\
0.8 & 8   (0.04) & 7  (0.04) & 7   (0.04) &   7 (0.04) &  7 (0.04) \\
\hline
\end{tabular}
\caption{Number of $V(0,1)-$WRMG iterations necessary to reduce the initial residual in a factor of $10^{-10}$ for different fractional orders $\delta$ and for different grid-sizes. The corresponding average convergence factors (in brackets) are also included.}
\label{table_it_1d}
\end{center}
\end{table}

Next, we investigate the effect of the rank $k$ on the convergence behavior of the V-cycle multigrid. Here, we fix the grid-size to be $512 \times 512$ and vary the fractional order $\delta$ and rank $k$. The results are shown in Table~\ref{table_it_k_1d}. As previously, we display the number of WRMG iterations necessary to reduce the initial residual in a factor of $10^{-10}$, together with the corresponding average convergence factors. It is clear that $k$ does not affect the performance of the V-cycle multigrid, more precisely, the number of iterations stays the same and the convergence factors also remain constant. This is expected because the rank $k$ only affects the approximation of the $\mathcal{H}$-matrix representation.  

\begin{table}[htb]
\begin{center}
\begin{tabular}{ccccccc}
\hline
$\delta$ & $k=5$ & $k=10$ & $k=15$ & $k=20$ & $k=25$ & $k=30$ \\
\hline
0.2 & 11 (0.11) & 11 (0.11) & 11 (0.11) & 11 (0.11)  &  11 (0.11) & 11 (0.11)\\
0.4 & 11 (0.11) & 11 (0.11) & 11 (0.11) &  11 (0.11)  & 10 (0.11) & 11 (0.11) \\
0.6 & 8 (0.05) & 8 (0.05) & 8 (0.05) &   8 (0.05) &  8 (0.05) & 8 (0.05) \\
0.8 & 7  (0.04) & 7 (0.04) & 7   (0.04) &   7 (0.04) &  7 (0.04) & 7 (0.04)\\
\hline
\end{tabular}
\caption{
Number of $V(0,1)-$WRMG iterations necessary to reduce the initial residual in a factor of $10^{-10}$ for different fractional orders $\delta$ and for different values of $k$ (fixed grid-size $512\times512$). The corresponding average convergence factors (in brackets) are also included.
}
\label{table_it_k_1d}
\end{center}
\end{table}

Taking into account the excellent convergence rates obtained and that, as we previously commented, the total computational cost is ${\cal O}(kNM\log(M))$ thanks to the use of the ${\cal H}$-matrix representation, we provide a very efficient solver for the time-fractional heat equation on graded meshes.
In Figure \ref{cputime_1d}, we show the CPU time of the proposed WRMG method for different fractional orders. We observe that, for all cases, the computational complexity is optimal, which confirms our expectation.

\begin{figure}[htb]
\centering\includegraphics[scale = 0.35]{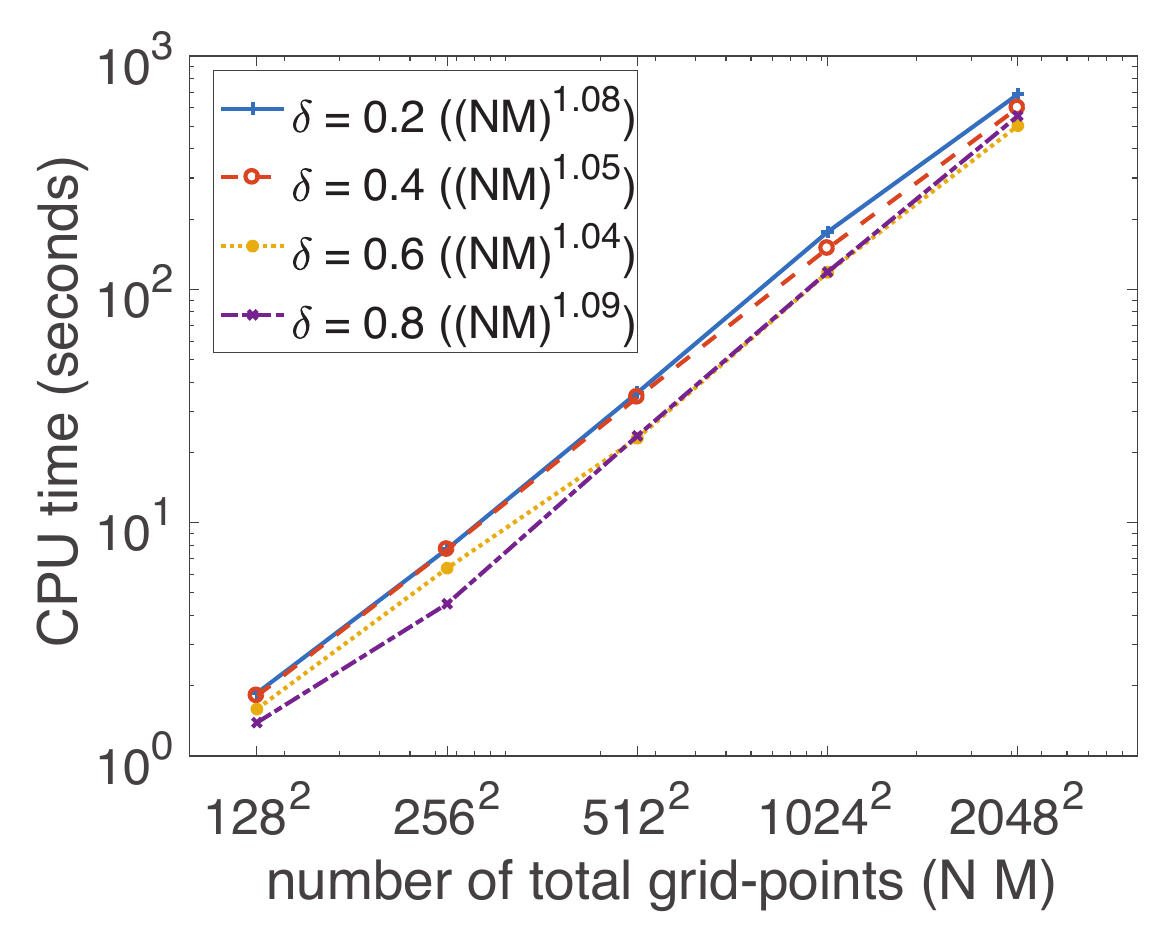}
\caption{CPU time (in seconds) of WRMG method using ${\cal H}$-matrices on nonuniform grids for the first numerical experiment and for different fractional indexes.}
\label{cputime_1d}
\end{figure}

\subsection{Two-dimensional time-fractional heat equation}
The aim of this second numerical experiment is to show that the proposed strategy can be extended to problems with higher spatial dimensions.  In particular, here we consider a two-dimensional time-fractional diffusion model problem defined on the spatial domain $\Omega=(0,\pi)\times (0,\pi)$ given by
\begin{eqnarray}
D_t^{\delta} u - \Delta u &=& f(x,y,t),\quad (x,y)\in\Omega, \; t>0, \label{2D_model_BVP_1_ex}\\
u(x,y,t) &=& 0,\quad (x,y)\in \partial\Omega, \; t>0,\label{2D_model_BVP_2_ex}\\
u(x,y,0) &=& 0, \quad (x,y)\in\overline{\Omega},\label{2D_model_BVP_3_ex}
\end{eqnarray}
where the right-hand side $f$ is defined as
$$f(x,y,t) = 2(t^3+t^{\delta})\sin x  \sin y +
       \left(\Gamma(\delta+1)+\frac{\Gamma(4)}{\Gamma(4-\delta)}t^{3-\delta}\right)
        \sin x \sin y.$$  
It can be easily seen that the analytic solution is $$u(x,y,t) = (t^3+t^{\delta}) \sin\, x \sin \,y.$$

A semi-algebraic analysis is performed analogously to the one-dimensional spatial case. The only difference is that a standard two-dimensional local Fourier analysis is used now in the spatial domain, combined again with an exact analytical approach in time.  First of all, in Figure \ref{SAMA_2D_compare}, the asymptotic convergence factors obtained by using a multilevel $W$-cycle are compared to the two-grid convergence rates predicted by the semi-algebraic analysis.  This comparison is done by choosing $\delta = 0.4$, $M=128$ time-steps, and for a range of values of $N$ from $N=8 \times 8$ to $N=512 \times 512$.  Accurate correspondence can be observed between the real and the predicted values and such comparisons are similar for different values of the fractional order, which shows that the analysis provides a very useful tool for the study of the convergence of the proposed method. 
\begin{figure}[htb]
\begin{center}
\includegraphics[width = 0.6\textwidth]{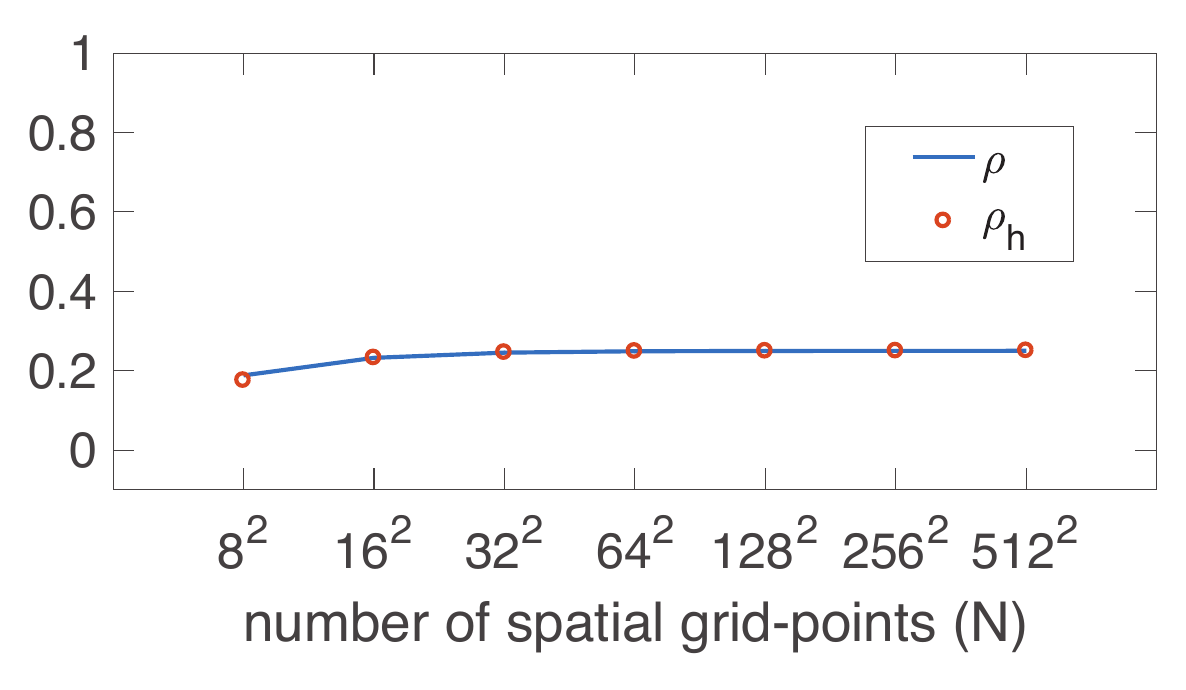}
\caption{Comparison between the two-grid convergence factors predicted by the analysis ($\rho$) and the asymptotic convergence factor of a $W(1,0)$-cycle experimentally computed ($\rho_h$), for different numbers of spatial grid-points ($N$), fractional order $\delta = 0.4$, and for a fixed number of time-steps $M=128$.}
\label{SAMA_2D_compare}
\end{center}
\end{figure}

The semi-algebraic analysis is used now to demonstrate the robustness of the multigrid waveform relaxation method with respect to the fractional order $\delta$.  To this end, in Figure \ref{LFA_variable_N_2D}, the two-grid convergence factors provided by SAMA are shown for different values of $\delta$ and different numbers of spatial grid-points $N$, for a fixed number of time-steps $M=128$.  A very satisfactory convergence is observed in all cases, making the multigrid waveform relaxation method a good choice for an efficient solution of the time-fractional two-dimensional heat equation. 
\begin{figure}[htb]
\begin{center}
\includegraphics[width = 0.75\textwidth]{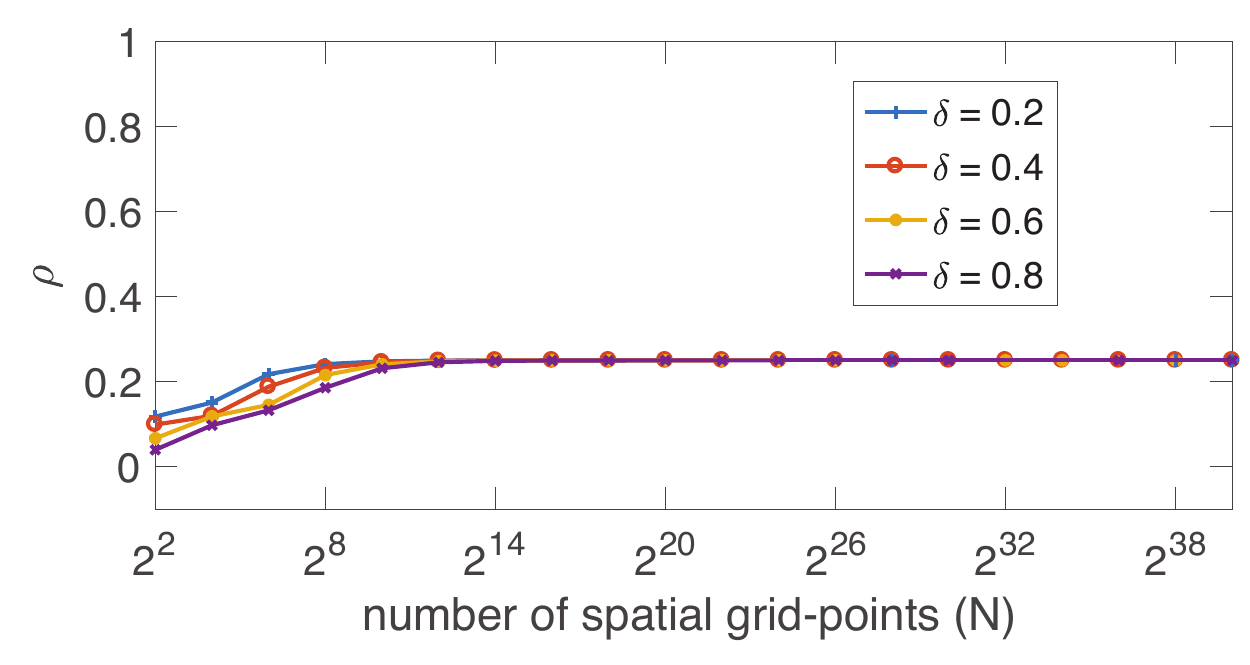}
\caption{Two-grid convergence factors predicted by SAMA for different numbers of spatial grid-points ($N$) and four fractional orders $\delta$, when considering $M=128$ time-steps.}
\label{LFA_variable_N_2D}
\end{center}
\end{figure}

In order to show the efficiency and the robustness of the proposed method in the case of two spatial dimensions, we consider a multigrid $V$-cycle with one pre- and one post-smoothing steps. As for the one-dimensional case, in Table~\ref{table_it_2d}, we show a performance of the method independent of the grid-size and robust with respect to the value of the fractional order $\delta$. In particular, we show the number of WRMG iterations that are needed to reduce the initial maximum residual by a factor of $10^{-10}$ for different grid-sizes varying from $32\times 32\times 32$ to $256\times 256\times 256$ and for different values of the fractional order $\delta$. The average convergence factors are displayed as well.
\begin{table}[htb]
\begin{center}
\begin{tabular}{ccccc}
\hline
$\delta$ & $32\times 32\times 32$ & $64\times 64\times 64$ & $128\times 128\times 128$ & $256\times 256\times 256$  \\
\hline
0.2 & 9 (0.07) & 9 (0.07) & 9 (0.07) & 9 (0.07)  \\
0.4 & 9 (0.07) & 9 (0.08) & 9 (0.08) & 9 (0.08)  \\
0.6 & 9 (0.07) & 9 (0.08) & 9 (0.08) & 9 (0.08)  \\
0.8 & 9 (0.07) & 9 (0.08) & 9 (0.08) & 9 (0.08)  \\
\hline
\end{tabular}
\caption{Number of $V(1,1)$-WRMG iterations necessary to reduce the initial residual in a factor of $10^{-10}$, together with the corresponding average convergence factors (in brackets), for different fractional orders $\delta$ and for different grid-sizes.}
\label{table_it_2d}
\end{center}
\end{table}

In Table~\ref{table_it_k_2d}, we study the influence of the rank $k$ in the WRMG performance. We fix the grid-size to be $64 \times 64 \times 64$ and vary the fractional order $\delta$ and the rank $k$. We can see from Table~\ref{table_it_k_2d} that $k$ does not affect the performance of the V-cycle multigrid, since the number of iterations stays the same, as well as the convergence factors remain constant. This demonstrates the robustness of our $\mathcal{H}$-matrix representation. 

\begin{table}[htb]
\begin{center}
\begin{tabular}{ccccccc}
\hline
$\delta$ & $k=5$ & $k=10$ & $k=15$ & $k=20$  & $k=25$ & $k=30$ \\
\hline
0.2 & 9 (0.07) & 9 (0.07) & 9 (0.07) & 9 (0.07)  & 9 (0.07)  & 9 (0.07)   \\
0.4 & 9 (0.07) & 9 (0.08) & 9 (0.07) & 9 (0.08) & 9 (0.08) & 9 (0.07)  \\
0.6 & 9 (0.08) & 9 (0.08) & 9 (0.08) & 9 (0.08) & 9 (0.08) & 9 (0.08)  \\
0.8 & 9 (0.08) & 9 (0.08) & 9 (0.08)&  9 (0.08) & 9 (0.08)& 9 (0.08) \\
\hline
\end{tabular}
\caption{
Number of $V(1,1)$-WRMG iterations necessary to reduce the initial residual in a factor of $10^{-10}$, together with the corresponding average convergence factors (in brackets), for different fractional orders $\delta$ and for different rank $k$ (fixed grid-size $64\times64\times64$).  
}
\label{table_it_k_2d}
\end{center}
\end{table}

Again, we obtain an excellent convergence that together with the computational cost of ${\cal O}(kNM\log(M))$, makes the proposed WRMG method a very efficient solver also for the two-dimensional time-fractional heat equation on graded meshes. 
The CPU time of the proposed WRMG method is shown in Figure \ref{cputime_2d} for different fractional orders, where we can confirm that the computational complexity is optimal for all cases, as expected.
\begin{figure}[htb]
\centering\includegraphics[scale = 0.35]{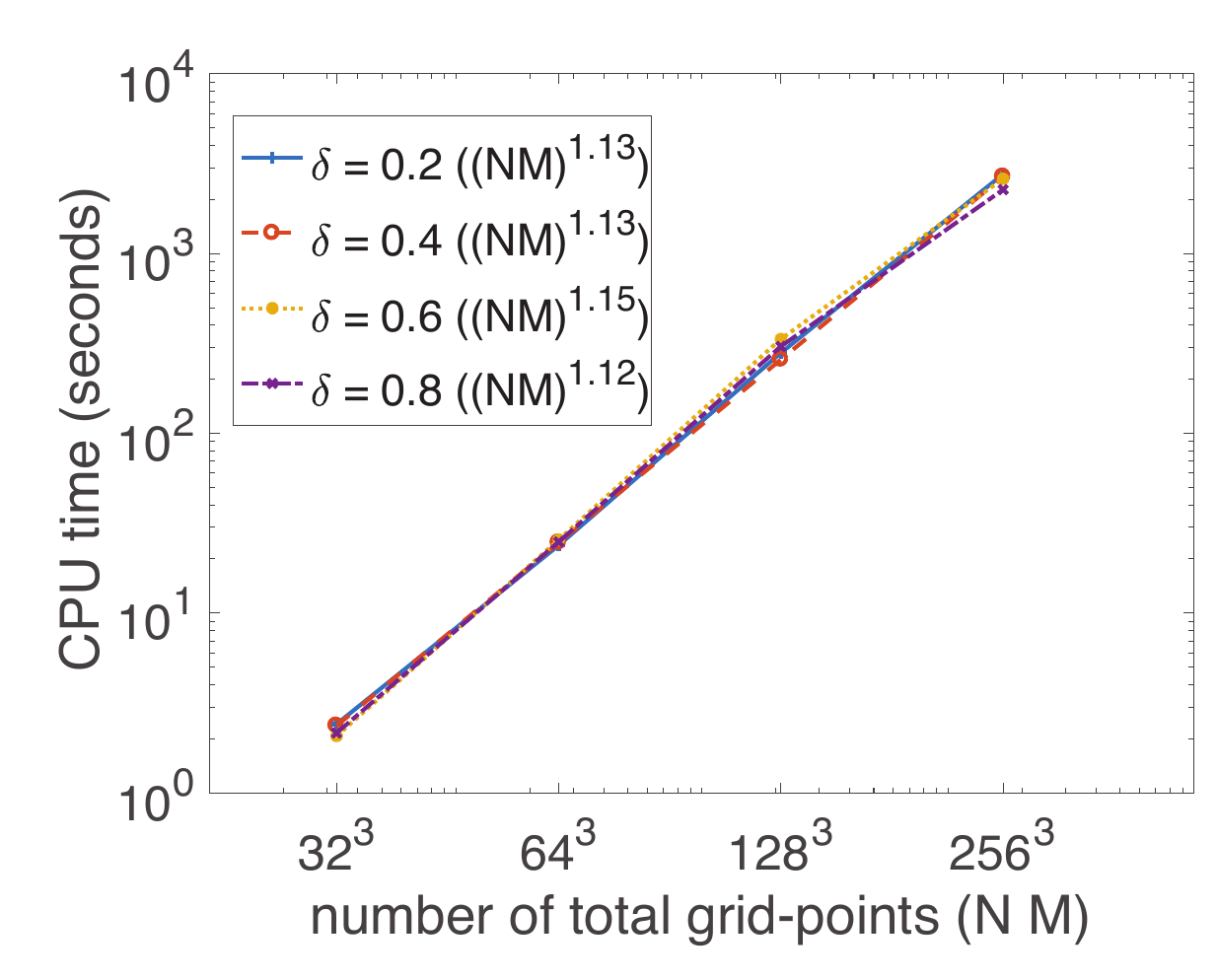}
\caption{CPU time (in seconds) of WRMG method using ${\cal H}$-matrices on nonuniform grids for the second numerical experiment and for different fractional indexes.}
\label{cputime_2d}
\end{figure}


\section{Conclusions}\label{sec:6}

In this work, we have proposed a fast solver for the time-fractional heat equation, targeting in particular ``typical'' non-smooth solutions that arise even when right-hand sides are smooth.  The use of uniform grids in these cases may lead to a very poor convergence of the numerical solution, which can be enhanced by considering graded meshes. 
The algorithm is based on the multigrid acceleration of the waveform relaxation method, which provides an optimal complexity only for uniform grids.  For non-uniform grids in time, however, the computational cost increases and is not optimal any more.  In order to overcome this drawback and keep the optimality of the overall computational complexity of the method, the hierarchical matrices framework is considered.  In this way, a computational cost of ${\cal O}(kNM\log(M))$, where $M$ is the number of time steps and $N$ is the number of spatial grid points, is obtained.  Numerical experiments with one- and two-spatial dimensions are presented to illustrate the efficiency of the method and its robustness with respect to the fractional orders. Within these tests, a semi-algebraic mode analysis is used to theoretically justify the good convergence rates provided by the multigrid waveform relaxation algorithm. 

One direction for future work is to study how to generalize our fast solver to high order schemes. This involves rewriting the high order schemes using the finite element framework. If this is doable, we can replace the original kernel by a separable kernel and construct the corresponding $\mathcal{H}$-matrix representations.  Otherwise, our method cannot be directly applied and we need to investigate other ways to construct the low rank approximations, for example, construct them algebraically. Another possible research direction is the generalization of the proposed approach to time-fractional nonlinear problems.  We plan to use the $\mathcal{H}$-matrix representation to approximate the time-fractional derivatives and nonlinear multigrid schemes to handle the nonlinearity, for example,  using the full approximation scheme~\cite{Bra77}.


\section*{Acknowledgements}

Francisco J. Gaspar has received funding from the European Union's Horizon 2020 research and innovation programme under the Marie Sklodowska-Curie grant agreement No 705402, POROSOS. The work of Carmen Rodrigo is supported in part by the Spanish project FEDER /MCYT MTM2016-75139-R and the Diputaci\'on General de Arag\'on (Grupo de Referencia APEDIF, ref. E24\_17R). The work of Hu is partially supported by NSF grant DMS-1620063.


\bibliography{references}

\end{document}